\documentclass[12pt,a4]{article}
\usepackage{amsmath, amsthm, amssymb}
\usepackage[all]{xy}
\usepackage{amscd}
\usepackage{array}
\usepackage{multirow}
\usepackage{color}
\usepackage[affil-it]{authblk}
\usepackage{rotating}
\usepackage{mathrsfs}
\usepackage{textcomp}
\usepackage{graphicx}
\usepackage[a4paper,left=2.5cm,right=2.5cm,top=2.5cm,bottom=2.5cm]{geometry}




\newcommand{\tdot}{\includegraphics[width=0.125in]{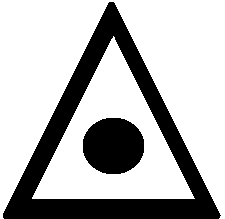}}

\numberwithin{equation}{section}
\setlength{\parindent}{0cm}
\pagestyle{empty}
\exhyphenpenalty=10000

\newtheorem{thm}{Theorem}[section]
\newtheorem{defnt}[thm]{Definition}

\newtheorem{prop}[thm]{Proposition}

\newtheorem{cor}[thm]{Corollary}
\newtheorem{lemma}[thm]{Lemma}
\newtheorem{question}[thm]{Question}

\newenvironment{proof*}{\begin{trivlist}\item[\hskip\labelsep{\em Proof of Theorem \ref{t2}}]}{%
\hfill$\square$\rm\end{trivlist}}
\newenvironment{proof**}{\begin{trivlist}\item[\hskip\labelsep{\em Proof of Proposition \ref{p4}}]}{%
\hfill$\square$\rm\end{trivlist}}

\begin{document}
\bibliographystyle{plain}
\pagestyle{plain} \pagenumbering{arabic}
\date{}
\newpage
\title{\textbf{\small{Some Remarks on Diametral Dimension and Approximate Diametral Dimension of Certain Nuclear Fr\'echet Spaces}}}
\author{
\small{Nazl{\i} Do\u{g}an}
}
 \affil{{ \small{{Department of Mathematics} \\
{Istanbul Technical University}\\
{Maslak, 34469, Istanbul, Turkey}\\

}}}

\maketitle

\begin{abstract}\footnotesize{The diametral dimension, $\Delta(E)$, and the approximate diametral dimension, $\delta (E)$,  of a nuclear Fr\'echet space $E$ which satisfies $\underline{DN}$ and $\Omega$, are related to corresponding invariant of power series spaces $\Lambda_{1}(\varepsilon)$ and $\Lambda_{\infty}\left(\varepsilon\right)$ for some exponent sequence $\varepsilon$. In this article, we examine a question of whether $\delta (E)$ must coincide with that of a power series space if $\Delta(E)$ does the same, and vice versa. In this regard, we first show that this question has an affirmative answer in the infinite type case by showing that $\Delta (E)=\Delta\left(\Lambda_{\infty} (\varepsilon)\right)$ if and only if $\delta (E)= \delta (\Lambda_{\infty}(\varepsilon))$. Then we consider the question in the finite type case and, among other things, we prove that  $\delta (E)=\delta\left(\Lambda_{1} (\varepsilon)\right)$ if and only if $\Delta (E)= \Delta (\Lambda_{1}(\varepsilon))$ and $E$ has a prominent bounded subset.}

\end{abstract}

\footnotesize{{\bf \emph{Keywords}:} \emph{Nuclear Fr\'echet Spaces, Diametral Dimension, Topological Invariants, Prominent Bounded Subsets\\2010 Mathematics Subject Classification. 46A04, 46A11, 46A63.}}
\section{Introduction}\label{intro}
Power series spaces constitute an important and well studied class in the theory of Fr\'echet spaces. Linear topological invariants $\underline{DN}$ and $\Omega$ (see definition below) are enjoyed by many natural nuclear Fr\'echet spaces appearing in analysis. In particular, spaces of analytic functions, solutions of homogeneous elliptic linear partial differential operators with their natural topologies have the properties $\underline{DN}$ and $\Omega$, see \cite{MV}  and \cite{T6}. 
~\\ Let $E$ be a nuclear Fr\'echet space which satisfies $\underline{DN}$ and $\Omega$. Then it is a well known fact that the diametral dimension $\Delta (E)$ and the approximate diametral dimension $\delta (E)$ of $E$ are set theoretically between corresponding invariant of power series spaces $\Lambda_{1}(\varepsilon)$ and $\Lambda_{\infty}\left(\varepsilon\right)$ for some specific exponent sequence $\varepsilon$. Coincidence of diametral dimension and/or approximate diametral dimension of $E$ with that of a power series space yields some structural results. For example, in \cite{AKT2}, Aytuna et al. proved that a nuclear Fr\'echet space $E$ with the properties $\underline{DN}$ and $\Omega$ contains a complemented copy of $\Lambda_{\infty}\left(\varepsilon\right)$ provided the diametral dimensions of $E$ and $\Lambda_{\infty}\left(\varepsilon\right)$ are equal and $\varepsilon$ is stable. On the other hand, Aytuna \cite{A} characterized tame nuclear Fr\'echet spaces $E$ with the properties $\underline{DN}$, $\Omega$ and  stable associated exponent sequence $\varepsilon$, as those that satisfies  $\delta (E)= \delta (\Lambda_{1}(\varepsilon))$. These results leads to ask the following question:

\begin{question}\label{q1} Let $E$ be a nuclear Fr\'echet space with the properties $\underline{DN}$ and $\Omega$. If diametral dimension of $E$ coincides with that of a power series space, then does this imply that the approximate diametral dimension is also do the same and vice versa?
\end{question}

This article is concerned with this question and the layout is as follows:
~\\ In Section 2, we give some preliminary materials. Then in Section 3, we show that Question \ref{q1} has an affirmative answer when the power series space is of infinite type. In our final section, we search an answer for the Question \ref{q1} in the finite type case and, in this regard, we first prove that the condition $\delta\left(E\right)= \delta\left(\Lambda_{1}\left(\varepsilon\right)\right)$ always implies $\Delta\left(E\right)= \Delta\left(\Lambda_{1}\left(\varepsilon\right)\right)$. For other direction, the existence of a prominent bounded subset in the nuclear Fr\'echet space $E$ plays a decisive role for the answer of Question 1.1. Among other things, we prove that $\delta\left(E\right)=\delta\left(\Lambda_{1}\left(\varepsilon\right)\right)$ if and only if $E$ has a prominent bounded set and $\Delta\left(E\right)=\Delta(\Lambda_{1}\left(\varepsilon\right))$.

\section{Preliminaries}
In this section, after establishing terminology and notation, we collect some basic facts and definitions that are needed them in the sequel. 
~\\~\\
We will use the standard terminology and notation of \cite{MV} and \cite{K}. Throughout the article, $E$ will denote a nuclear Fr\'echet space with an increasing sequence of Hilbertian seminorms $\left(\left\|.\right\|_{k}\right)_{k\in \mathbb{N}}$ and the local Hilbert spaces corresponding to the norm $\left\|\cdot\right\|_{k}$ will be denoted by $E_{k}$. 
\par
For a Fr\'echet space $E$, $\mathcal{U}\left(E \right)$, $\mathcal{B}\left(E \right)$ will denote the class of all neighborhoods of zero in $E$ and  the class of all bounded sets in $E$, respectively. If $U$ and $V$ are absolutely convex sets of $E$ and $U$ absorbs $V$, that is $V\subseteq CU$ for some $C>0$, and $L$ is a subspace of $E$, then we set;
$$\delta\left(V, U, L\right)=\inf\left\{t>0: V\subseteq tU+L\right\}.$$
The $n^{th}$ \textit{Kolmogorov diameter} of $V$ with respect to $U$ is defined as;
$$d_{n}\left(V,U\right)=\inf\left\{\delta\left(V, U, L\right): \dim L\leq n\right\}\hspace{0.1in}n=0,1,....$$
and \textit{the diametral dimension} of $E$ is defined as;
\begin{equation*}
\begin{split}
\Delta\left(E\right)=& \left\{\left(t_{n}\right)_{n\in \mathbb{N}}: \forall\hspace{0.05in}U\in \mathcal{U}\left(E\right) \hspace{0.05in} \exists \hspace{0.05in}V\in\mathcal{U}\left(E\right) \hspace{0.05in}\lim_{n\rightarrow\infty} t_{n}d_{n}\left(V,U\right)=0\right\}\\ =&\displaystyle \bigcap_{{U\in \mathcal{U}\left(E \right)}}\hspace{0.05in}\bigcup_{V\in \mathcal{U}\left(E \right)}\Delta\left( V, U\right) 
\end{split}
\end{equation*}
where $\displaystyle \Delta\left( V, U\right)=\left\lbrace \left(t_{n} \right)_{n\in \mathbb{N}}: \lim_{n\rightarrow\infty}t_{n}d_{n}\left( V,U\right)=0 \right\rbrace$. 
\\Let $U_{1}\supset U_{2}\supset \cdots \supset U_{p}\supset\cdots$ be a base of neighborhoods of Fr\'echet space E. Diametral dimension can be represented as
 $$\Delta\left(E \right)= \left\{\left(t_{n}\right)_{n\in \mathbb{N}}: \forall p\in \mathbb{N} \hspace{0.1in} \exists \hspace{0.05in} q>p \hspace{0.1in}\lim_{n\rightarrow\infty} t_{n}d_{n}\left(U_{q},U_{p}\right)=0\right\}.$$
\textit{The approximate diametral dimension} of a Fr\'echet space $E$ is defined as;
\begin{equation*}
\begin{split}
\delta\left(E \right)&= \left\{\left(t_{n}\right)_{n\in \mathbb{N}}: \forall\hspace{0.05in}U\in \mathcal{U}\left(E\right) \hspace{0.05in} \forall \hspace{0.05in}B\in\mathcal{B}\left(E\right) \hspace{0.1in}\lim_{n\rightarrow\infty}{ t_{n}\over d_{n}\left(B,U\right)}=0\right\}
\\ &=\bigcup_{U\in \mathcal{U}\left(E \right)}\bigcup_{B\in \mathcal{B}\left(E \right)} \delta\left( B, U\right) 
\end{split}
\end{equation*}
where $\displaystyle\delta\left( B, U\right)=\left\lbrace \left( t_{n}\right)_{n\in \mathbb{N}}: \lim_{n\rightarrow\infty}{t_{n}\over d_{n}\left( B,U\right) }=0 \right\rbrace $. It follows from Proposition 6.6.5 of \cite{R} that for a Fr\'echet space $E$ with the base of neighborhoods $U_{1}\supset U_{2}\supset \cdots \supset U_{p}\supset\cdots$, the approximate diametral dimension can be represented as;
$$\delta\left(E \right)= \left\{\left(t_{n}\right)_{n\in \mathbb{N}}: \exists p\in \mathbb{N} \hspace{0.05in} \forall \hspace{0.05in} q>p \hspace{0.05in}\lim_{n\rightarrow\infty}{ t_{n}\over d_{n}\left(U_{q},U_{p}\right)}=0\right\}.$$
The concept of the approximative dimension of a linear metric space which is based on $\varepsilon$-capacity of compact sets in the space was introduced by Kolmogorov and Pelcyznski, see also \cite{Kol}, \cite{P} and \cite{R}. The relation between invariants introduced above and $\varepsilon$-capacity of compact sets in the space was discovered by Mityagin, \cite{M1} and \cite{M2}. Among other thing, Mityagin conducted a detailed study of these invariants and used them characterize nuclear locally convex space. The concept of approximate diametral dimension as stated above was given and studied by Bessaga, Pelczynski and Rolewicz, \cite{BRP}.  
\\Demeulenaere et al.\cite{L} showed that the diametral dimension of a nuclear Fr\'echet space can also be represented as;
$$\Delta\left(E\right)=\left\{\left(t_{n}\right)_{n\in \mathbb{N}}: \forall\hspace{0.05in}p\in \mathbb{N} \hspace{0.1in} \exists \hspace{0.05in}q> p \hspace{0.1in}\sup_{n\in \mathbb{N}} \left|t_{n}\right|d_{n}\left(U_{q},U_{p}\right)<+\infty \right\}.$$
\par Let $E$ and $G$ be two Fr\'echet spaces and $U$ and $V$ be absolutely convex two subsets of space $E$ such that $V\subseteq r U$ for some $r>0$. If there is a linear map $T: E \rightarrow G$, then for all $n\in \mathbb{N}$
$$d_{n}\left(T\left(V\right), T\left(U\right)\right)\leq d_{n}\left(V,U\right)$$
holds and so it follows that if $F$ is a subspace or a quotient of $E$, then $\Delta\left(E\right)\subseteq \Delta \left(F\right)$ and $\delta\left(F\right)\subseteq \delta\left(E\right)$. Hence diametral dimension and approximate diametral dimension are invariant under isomorphism, in other words, these are linear topological invariants. For the proof of these and for additional properties of the diametral dimension$\slash$approximate diametral dimension, we refeer the reader to \cite{BRP}, \cite{M2},\cite{Pie}(Chapter 9), \cite{R},(Chapter 6.5 and 6.6) and  \cite{T7} . 
~\\~\\
\textbf{The properties of the canonical topology on diametral dimension of a nuclear Fr\'echet space:}
~\\ Let $E$ be a nuclear Fr\'echet space. Then the diametral dimension
\begin{equation*}
\begin{split}
\Delta\left(E\right)=& \left\{\left(t_{n}\right)_{n\in \mathbb{N}}: \forall\hspace{0.05in}p\in \mathbb{N} \hspace{0.05in} \exists q>p \hspace{0.05in}\lim_{n\rightarrow\infty} t_{n}d_{n}\left(U_{q},U_{p}\right)=0\right\}\\ =& \bigcap_{p\in \mathbb{N}}\hspace{0.05in}\bigcup_{q>p}\Delta\left( U_{q}, U_{p}\right) 
\end{split}
\end{equation*}
is the projective limit of inductive limits of Banach spaces $\displaystyle \Delta\left( U_{q}, U_{p}\right)$. Hence $\Delta (E)$ is a topological space with respect to that topology which will be called as \textit{the canonical topology}. Furthermore, $\Delta (E)$ can be considered as a weighted PLB-spaces of continuous functions.
~\\ The topological properties of weighted PLB-spaces of continuous functions were studied in \cite{ABB}. In particular, the following theorem gives an information about the canonical topology of diametral dimension $\Delta\left(E\right)$ and it is a direct consequence of Theorem 3.7 of \cite{ABB}. 
\begin{thm}\label{t1} Let $E$ be a Fr\'echet space. The following conditions are equivalent:
\begin{enumerate}
	\item $\Delta\left(E\right)$ is ultrabornological with respect to the canonical topology.
	\item $\Delta\left(E\right)$ is barrelled with respect to the canonical topology.
	\item $\Delta\left(E\right)$ satisfies condition (wQ):
	$$\displaystyle \forall N\hspace{0.075in}\exists\hspace{0.075in}M, n\hspace{0.075in}\forall K, m, \hspace{0.075in}\exists\hspace{0.075in} k, S>0:\hspace{0.075in} \min\left(d_{n}\left(U_{n}, U_{N}\right), d_{n}\left(U_{k},U_{K}\right)\right)\leq S d_{n}\left(U_{m}, U_{M}\right)\hspace{0.25in} \forall n\in \mathbb{N}.$$ 
\end{enumerate}
\end{thm}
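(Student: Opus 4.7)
The plan is to reduce the statement to Theorem 3.7 of \cite{ABB}, by realizing $\Delta(E)$ explicitly as a weighted PLB-space of continuous functions on the discrete space $\mathbb{N}$. Fix a base of neighborhoods $U_1\supset U_2\supset\cdots$ of $E$ and define weights $v_{p,q}(n)=d_n(U_q,U_p)$ for $q>p$ and $n\in\mathbb{N}$. The Banach space $\Delta(U_q,U_p)$ is then canonically isometric to the weighted $c_0$-space $c_0(\mathbb{N},v_{p,q})=\{(t_n):\,t_n v_{p,q}(n)\to 0\}$ with norm $\sup_n|t_n|v_{p,q}(n)$, so
\[
\Delta(E)\;=\;\bigcap_{p\in\mathbb{N}}\ \bigcup_{q>p}c_0(\mathbb{N},v_{p,q})
\]
is a weighted PLB-space of the form considered in \cite{ABB}. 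The first step of the proof is therefore to record this identification carefully, including the fact that the canonical topology on $\Delta(E)$ defined in the preceding paragraph coincides with the projective-inductive topology coming from this PLB-representation.

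Next I would verify that the weight system $\{v_{p,q}\}$ satisfies the general hypotheses of \cite{ABB}. The monotonicity $U_p\supset U_{p'}$ for $p<p'$ and $U_q\subset U_{q'}$ for $q>q'$ translates, via the standard monotonicity of Kolmogorov diameters, into $v_{p,q}\leq v_{p',q}$ and $v_{p,q}\leq v_{p,q'}$, which is exactly the ordering required for a decreasing/increasing family of weights in the PLB-setup. Nuclearity of $E$ gives $d_n(U_q,U_p)\to 0$ as $n\to\infty$ whenever $q$ is chosen large enough relative to $p$, which is the decay needed for the spaces $c_0(\mathbb{N},v_{p,q})$ to be well-defined and to fit the framework of \cite{ABB}.

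With these preparations, the third step is to invoke Theorem 3.7 of \cite{ABB}, which asserts for weighted PLB-spaces of continuous functions on a countable discrete set the equivalence of ultrabornologicity, barrelledness, and the abstract condition (wQ) on the weight system. Rewriting the abstract (wQ) in terms of the concrete weights $v_{p,q}(n)=d_n(U_q,U_p)$ yields verbatim the condition (wQ) displayed in the statement of Theorem \ref{t1}, closing the loop.

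The main obstacle is the bookkeeping of the reduction itself rather than a genuinely hard argument: one must check that the isometric identification $\Delta(U_q,U_p)\cong c_0(\mathbb{N},v_{p,q})$ is compatible with the linking maps of the PLB-system, so that the projective-inductive topologies agree, and one must match the ordering conventions of \cite{ABB} (which typically phrases (wQ) for weights on a locally compact space) to the discrete case $X=\mathbb{N}$ used here. Once these compatibilities are in place, no further analytic content is needed beyond citing \cite[Theorem 3.7]{ABB}.
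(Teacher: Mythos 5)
Your proposal is correct and follows essentially the same route as the paper, which offers no argument beyond declaring the theorem a direct consequence of Theorem 3.7 of \cite{ABB}; your identification of $\Delta(E)=\bigcap_{p}\bigcup_{q>p}\Delta\left(U_{q},U_{p}\right)$ with a weighted PLB-space over the discrete set $\mathbb{N}$ via the weights $v_{p,q}(n)=d_{n}\left(U_{q},U_{p}\right)$, together with the monotonicity checks, is exactly the intended reduction. The only detail worth flagging is the choice between the $c_{0}$-type and $\ell_{\infty}$-type weighted steps in the PLB-representation, which for nuclear $E$ is immaterial by the representation of Demeulenaere et al.\ \cite{L}.
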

We will use this theorem in the fourth section.
\\
 Power series spaces form an important family of Fr\'echet spaces and they play an significant role in this article. Let $\alpha=\left(\alpha_{n}\right)_{n\in \mathbb{N}}$ be a non-negative increasing sequence with $\displaystyle \lim_{n\rightarrow \infty} \alpha_{n}=+\infty$.  Thoughout this article, all power series spaces are assumed to be nuclear. Recall that a power series space of finite type is defined by
$$\displaystyle \Lambda_{1}\left(\alpha\right):=\left\{x=\left(x_{n}\right)_{n\in \mathbb{N}}: \left\|x\right\|_{k}:=\sup_{n\in \mathbb{N}}\left|x_{n}\right|e^{{-{1\over k}\alpha_{n}}}<+\infty \textnormal{ for all } k\in \mathbb{N}\right\}$$
and a power series space of infinite type is defined by
$$\displaystyle \Lambda_{\infty}\left(\alpha\right):=\left\{x=\left(x_{n}\right)_{n\in \mathbb{N}}: \left\|x\right\|_{k}:=\sup_{n\in\mathbb{N}}\left|x_{n}\right|e^{k\alpha_{n}}<+\infty \textnormal{ for all } k\in \mathbb{N}\right\}.$$
Power series spaces are actually Fr\'echet spaces equipped with the seminorms $\left(\left\|.\right\|_{k}\right)_{k\in \mathbb{N}}$. 
Diametral dimension and approximate diametral dimension of power series spaces are
$$\Delta\left(\Lambda_{1}\left(\alpha\right)\right)=\Lambda_{1}\left(\alpha\right), \hspace{.2in} \Delta\left(\Lambda_{\infty}\left(\alpha\right)\right)=\Lambda_{\infty}\left(\alpha\right)^{\prime},\hspace{0.2in}\delta\left(\Lambda_{1}\left(\alpha\right)\right)=\Lambda_{1}\left(\alpha\right)^{\prime}, \hspace{0.2in}\textnormal{and}\hspace{0.2in} \delta\left(\Lambda_{\infty}\left(\alpha\right)\right)=\Lambda_{\infty}\left(\alpha\right),$$
see \cite{BRP} and \cite{M2}.
\par Other linear topological invariants that are used in this article are $\underline{DN}$ and $\Omega$, see \cite{MV} and references therein. 
\begin{defnt} A nuclear Fr\'echet space  $E$ is said to have the property \underline{DN} and $\Omega$ when the following conditions hold: ~\\
$\underline{\left(DN\right):}$ $\hspace{0.1in}$ There exists a $p\in \mathbb{N}$ such that for each $k>p$, an $n>k$, $0<\tau<1$ and a $C>0$ exist with
$$\left\|x\right\|_{k}\leq C\left\|x\right\|^{1-\tau}_{p}\left\|x\right\|^{\tau}_{n}\hspace{0.15in}\textnormal{for all }x\in E.$$
$\left(\Omega\right):$ $\hspace{0.15in}$ For each $p\in \mathbb{N}$, there exists a $q>p$ such that for every $k>q$ there exists a $0<\theta<1$ and a $C>0$ with
$$\left\|y\right\|^{*}_{q}\leq C{\left\|y\right\|^{*}}^{1-\theta}_{p}{\left\|y\right\|^{*}}^{\theta}_{k}\hspace{0.15in}\textnormal{ for all }\hspace{0.05in} y\in E^{'},$$
where
$$\displaystyle \left\|y\right\|^{*}_{k}:=\sup\left\{\left|y\left(x\right)\right|: \left\|x\right\|_{k}\leq 1\right\}\in \mathbb{R}\cup \left\{+\infty\right\}$$
is called the gauge functional of $U^{\circ}_{k}$ for $U_{k}= \left\{x\in E: \left\|x\right\|_{k}\leq 1\right\}$.
\end{defnt}
~\\  We end this section by recalling the following result which gives a relation between diametral dimension/approximate diametral dimension of a nuclear Fr\'echet spaces with the properties $\underline{DN}$, $\Omega$ and that of a power series spaces $\Lambda_{1}\left(\varepsilon\right)$ and $\Lambda_{\infty}\left(\varepsilon\right)$ for some special exponent sequence $\varepsilon$.
\begin{prop}\label{p1}$\left[\right.$Proposition 1.1, \cite{AKT2} $\left.\right]$ Let $E$ be a nuclear Fr\'echet space with the properties $\underline{DN}$ and $\Omega$. There exists an exponent sequence (unique up to equivalence) $\left(\varepsilon_{n}\right)$ satisfying:
$$\Delta\left(\Lambda_{1}\left(\varepsilon\right)\right)\subseteq\Delta\left(E\right)\subseteq\Delta\left(\Lambda_{\infty}\left(\varepsilon\right)\right).$$
Furthermore, $\Lambda_{1}\left(\alpha\right)\subseteq\Delta\left(E\right)$ implies $\Lambda_{1}\left(\alpha\right)\subseteq \Lambda_{1}\left(\varepsilon\right)$ and $\Delta\left(E\right)\subseteq \Lambda^{\prime}_{\infty}\left(\alpha\right)$ implies $\Lambda^{\prime}_{\infty}\left(\varepsilon\right)\subseteq\Lambda^{'}_{\infty}\left(\alpha\right)$.
\end{prop}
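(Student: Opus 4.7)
The plan is to pull the exponent sequence $\varepsilon$ directly out of the Kolmogorov diameters of $E$, and then to use $\underline{DN}$ and $\Omega$ as interpolation devices that force every diameter sequence $(-\log d_n(U_q, U_p))_n$ to be mutually comparable to a single asymptotic $(\varepsilon_n)_n$. The two sandwich inclusions will then follow directly from the defining formulas for $\Delta$ and for the power series spaces, and the last two implications in the statement (the maximality of $\Lambda_1(\varepsilon)$ and minimality of $\Lambda'_\infty(\varepsilon)$) yield the uniqueness of $\varepsilon$ up to equivalence.

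For the construction, I would fix the index $p_0$ furnished by $\underline{DN}$ as a dominating norm, and use $\Omega$ to pick $q_0 > p_0$ satisfying the $\Omega$-interpolation from $p_0$. Set $\varepsilon_n := -\log d_n(U_{q_0}, U_{p_0})$. After discarding finitely many initial indices this is an unbounded nondecreasing sequence of nonnegative reals, hence a genuine exponent sequence; nuclearity guarantees $\varepsilon_n \to \infty$. Since $E$ is nuclear with Hilbertian seminorms, each $d_n(U_q, U_p)$ equals the $(n+1)$-st singular number of the canonical inclusion $E_q \hookrightarrow E_p$, which makes the interpolation inequalities of $\underline{DN}$ and $\Omega$ amenable to Ky Fan / min-max manipulation.

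The main technical step is a two-sided comparison. On the one hand, $\Omega$ implies that for every $p'$ there exist $q' > p'$ and constants $c,C>0$ with $d_n(U_{q'}, U_{p'}) \leq C e^{-c \varepsilon_n}$ (the upper estimate on diameters, coming from a polar Ky Fan inequality applied to the dual interpolation $\|y\|^*_q \leq C(\|y\|^*_p)^{1-\theta}(\|y\|^*_k)^{\theta}$). On the other hand, $\underline{DN}$ implies that for every $p' < q'$ there exist $c,C>0$ with $d_n(U_{q'}, U_{p'}) \geq c e^{-C \varepsilon_n}$ (the lower estimate, from the direct interpolation $\|x\|_k \leq C\|x\|_p^{1-\tau}\|x\|_n^{\tau}$ applied at the level of singular vectors). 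I expect the parameter-chasing to be the chief obstacle here: one must carefully choose the indices appearing in each interpolation to relate back to the fixed pair $(p_0, q_0)$, and the Hilbertian reduction is essential in both steps.

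Given these bounds, the inclusions are short. For $\Lambda_1(\varepsilon) \subseteq \Delta(E)$, given $p'$ I would apply the $\Omega$-estimate to pick $q'$; any $(t_n) \in \Lambda_1(\varepsilon)$ satisfies $|t_n| \leq C_k e^{\varepsilon_n / k}$ for every $k > 0$, and choosing $k > 1/c$ gives $t_n d_n(U_{q'}, U_{p'}) \to 0$. For $\Delta(E) \subseteq \Delta(\Lambda_\infty(\varepsilon)) = \Lambda'_\infty(\varepsilon)$, any $(t_n) \in \Delta(E)$ applied to the pair $(p_0, q_0)$ produces some $q$ with $t_n d_n(U_q, U_{p_0}) \to 0$; combining with the $\underline{DN}$-lower estimate for $(p_0, q)$ gives $|t_n| = O(e^{C \varepsilon_n})$, i.e.\ $(t_n) \in \Lambda'_\infty(\varepsilon)$. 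For the maximality/minimality assertions, testing any hypothesis $\Lambda_1(\alpha) \subseteq \Delta(E)$ against the sharp diameter asymptotics forces $\alpha_n = O(\varepsilon_n)$, whence $\Lambda_1(\alpha) \subseteq \Lambda_1(\varepsilon)$, and the dual assertion $\Delta(E) \subseteq \Lambda'_\infty(\alpha) \Rightarrow \Lambda'_\infty(\varepsilon) \subseteq \Lambda'_\infty(\alpha)$ follows by an analogous argument using the $\underline{DN}$-side bound; uniqueness of $\varepsilon$ up to equivalence is then immediate by applying both extremal properties to any candidate.
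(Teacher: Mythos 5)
First, note that the paper does not actually prove this proposition: it is quoted from \cite{AKT2} (Proposition 1.1 there), and the paper only records the by-product of that proof, namely the two-sided estimate $C_{1}e^{-a_{1}\varepsilon_{n}}\le d_{n}(U_{q},U_{p})\le C_{2}e^{-a_{2}\varepsilon_{n}}$. Your architecture --- define $\varepsilon_{n}=-\log d_{n}(U_{q_{0}},U_{p_{0}})$ for an anchored pair and show that every diameter sequence is exponentially comparable to it, then read off the two inclusions and the extremality statements --- is exactly the cited argument, and your deduction of the inclusions from the two-sided estimate is fine.

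There is, however, a concrete error in the central step: you have the roles of $\underline{DN}$ and $\Omega$ reversed, and as described each half of your comparison would stall. The dual interpolation of $\Omega$, $\|y\|_{q}^{*}\le C(\|y\|_{p}^{*})^{1-\theta}(\|y\|_{k}^{*})^{\theta}$, translates (via $U_{q}\subseteq C(rU_{p}+r^{-(1-\theta)/\theta}U_{k})$ and optimization over $r$) into $d_{n}(U_{q},U_{p})\le C'\,d_{n}(U_{k},U_{p})^{\theta}$, i.e.\ it bounds a shallow diameter by a power of a deeper one; applied with $p=p_{0}$, $q=q_{0}$ this yields the \emph{lower} estimate $d_{n}(U_{k},U_{p_{0}})\ge c\,e^{-\varepsilon_{n}/\theta}$ for all $k$, hence $\Delta(E)\subseteq\Lambda_{\infty}'(\varepsilon)$ --- it cannot produce the upper estimate you attribute to it. Symmetrically, the primal interpolation of $\underline{DN}$, $\|x\|_{p'}\le C\|x\|_{p_{0}}^{1-\tau}\|x\|_{m}^{\tau}$, yields (after the Hilbertian simultaneous-diagonalization you correctly flag) $d_{n}(V,U_{p'})\le C\,d_{n}(V,U_{p_{0}})^{1-\tau}d_{n}(V,U_{m})^{\tau}$; taking $V=U_{q'}$ with $q'\ge\max(m,q_{0})$, so that $d_{n}(U_{q'},U_{m})\le 1$ and $d_{n}(U_{q'},U_{p_{0}})\le d_{n}(U_{q_{0}},U_{p_{0}})$, gives the \emph{upper} estimate $d_{n}(U_{q'},U_{p'})\le C e^{-(1-\tau)\varepsilon_{n}}$, hence $\Lambda_{1}(\varepsilon)\subseteq\Delta(E)$. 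It does not give a lower bound: rearranging it only produces $d_{n}(U_{q'},U_{m})\ge c\,d_{n}(U_{q'},U_{p'})^{1/\tau}$, and anchoring that to $\varepsilon_{n}$ already presupposes the $\Omega$-side bound at the base $p_{0}$. Once the two properties are swapped back (and a general pair $(p',q')$ is reduced to $(p_{0},q_{0})$ using the monotonicity of $d_{n}$ in both arguments together with \emph{both} interpolations), your outline matches the proof of \cite{AKT2} that the paper invokes.
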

The sequence $\varepsilon$ was called the \textit{associated exponent sequence} of $E$ in \cite{AKT2}. The exponent sequence $\varepsilon$ associated to $E$ contains some information about the structure of $E$. We note that $\Lambda_{\infty} (\varepsilon)$ is always nuclear provided $E$ is nuclear, but it may happen that $\Lambda_{1} (\varepsilon)$ is not nuclear. Throughout this article, we assume $\Lambda_{1} (\varepsilon)$ is nuclear for associated exponent sequence $\varepsilon$ of a nuclear Fr\'echet space $E$. In the proof of the above proposition, Aytuna et al. showed that there is an exponent sequence (unique up to equivalence) $\left(\varepsilon_{n}\right)$ such that for each $p\in \mathbb{N}$ and $q>p$, there exist $C_{1}, C_{2}>0$ and $a_{1}, a_{2}>0$ satisfying $C_{1}e^{{-a_{1}\varepsilon_{n}}}\leq d_{n}\left(U_{q},U_{p}\right)\leq C_{2}e^{{-a_{2}\varepsilon_{n}}}$ for all $n\in \mathbb{N}$. From this inequality, it follows $$\delta\left(\Lambda_{\infty}\left(\varepsilon\right)\right)\subseteq\delta\left(E\right)\subseteq\delta\left(\Lambda_{1}\left(\varepsilon\right)\right).$$


\section{Results in the infinite case}

The main result in this section is the following theorem which shows that Question \ref{q1} has an affirmative answer when the power series space is of infinite type.
\begin{thm}\label{t2} Let $E$ be a nuclear Fr\'echet space with properties \underline{DN} and $\Omega$ and $\varepsilon=\left(\varepsilon_{n}\right)_{n\in \mathbb{N}}$ be the associated exponent sequence of $E$. Then $\Delta\left(E\right)= \Delta\left(\Lambda_{\infty}\left(\varepsilon\right)\right)$ if and only if $\delta\left(E\right)= \delta\left(\Lambda_{\infty}\left(\varepsilon\right)\right).$
\end{thm}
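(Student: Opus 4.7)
The plan is to use Proposition~\ref{p1} to reduce the theorem to the equivalence
\[
\Delta(\Lambda_{\infty}(\varepsilon)) \subseteq \Delta(E) \iff \delta(E) \subseteq \delta(\Lambda_{\infty}(\varepsilon)),
\]
since the reverse inclusions are provided by Proposition~\ref{p1}. I will work with the representations $\delta(E) = \{(t_n) : \exists p,\ \forall q>p,\ t_n/d_n(U_q,U_p) \to 0\}$ and $\Delta(E) = \{(t_n) : \forall p,\ \exists q>p,\ t_n d_n(U_q,U_p) \to 0\}$ recorded in Section~2, together with the explicit forms $\Delta(\Lambda_\infty(\varepsilon)) = \{(t_n) : \exists M,\ t_n e^{-M\varepsilon_n} \to 0\}$ and $\delta(\Lambda_\infty(\varepsilon)) = \{(t_n) : \forall k,\ t_n e^{k\varepsilon_n} \to 0\}$; since $\varepsilon_n \to \infty$, each sequence $(e^{M\varepsilon_n})$ belongs to $\Delta(\Lambda_\infty(\varepsilon))$.

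For the implication $(\Rightarrow)$ I argue directly. Fix $(t_n) \in \delta(E)$ with witness $p_0$, so $t_n/d_n(U_q,U_{p_0}) \to 0$ for every $q > p_0$. For any $k$, the sequence $(e^{(k+1)\varepsilon_n})$ lies in $\Delta(\Lambda_\infty(\varepsilon))$ and, by hypothesis, in $\Delta(E)$. Applied at $p = p_0$ the defining property of $\Delta(E)$ produces $q > p_0$ with $e^{(k+1)\varepsilon_n} d_n(U_q,U_{p_0}) \to 0$. Multiplying this against $t_n/d_n(U_q,U_{p_0}) \to 0$ yields $t_n e^{(k+1)\varepsilon_n} \to 0$, hence $t_n e^{k\varepsilon_n} \to 0$ for every $k$, placing $(t_n) \in \delta(\Lambda_\infty(\varepsilon))$.

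For $(\Leftarrow)$ I would argue by contraposition. Suppose $\Delta(\Lambda_\infty(\varepsilon)) \not\subseteq \Delta(E)$; testing on $(e^{M\varepsilon_n})$ gives $p$ and $M$ such that $c_q := \limsup_n d_n(U_q,U_p) e^{M\varepsilon_n} > 0$ for every $q > p$. I construct $(t_n) \in \delta(E) \setminus \delta(\Lambda_\infty(\varepsilon))$ by diagonalisation. Choose $q_1 < q_2 < \cdots$ with $q_j \to \infty$, and a strictly increasing sequence $n_j$ inside the infinite set $\{n : d_n(U_{q_j},U_p) e^{M\varepsilon_n} \geq c_{q_j}/2\}$, chosen moreover so large that $\varepsilon_{n_j} \geq j + \log(j/c_{q_j})$ (possible because $\varepsilon_n \to \infty$). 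Set $t_{n_j} = (c_{q_j}/(4j))\,e^{-M\varepsilon_{n_j}}$ and $t_n = 0$ elsewhere. For any fixed $q > p$, for all large $j$ one has $q_j \geq q$, and monotonicity of $d_n(\cdot,U_p)$ in the first slot yields $d_{n_j}(U_q,U_p) \geq d_{n_j}(U_{q_j},U_p) \geq (c_{q_j}/2)\,e^{-M\varepsilon_{n_j}}$, so $t_{n_j}/d_{n_j}(U_q,U_p) \leq 1/(2j) \to 0$; the finitely many indices $j$ with $q_j < q$ do not affect the limit. Hence $(t_n) \in \delta(E)$. On the other hand, $t_{n_j} e^{(M+1)\varepsilon_{n_j}} = c_{q_j} e^{\varepsilon_{n_j}}/(4j) \geq e^{j}/4 \to \infty$ by the choice of $n_j$, so $(t_n) \notin \delta(\Lambda_\infty(\varepsilon))$.

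The main obstacle is precisely the diagonal construction in $(\Leftarrow)$: the constants $c_{q_j}$ may decay arbitrarily fast in $j$, yet one must simultaneously guarantee the uniform-in-$q$ decay of the $\delta(E)$-ratio (for which the only lever is monotonicity in the first slot together with the cofinality of the $q_j$) and enough magnitude at $n_j$ to escape $\delta(\Lambda_\infty(\varepsilon))$ (for which the lever is that the limsup-realising set for each $q_j$ is infinite and that $\varepsilon_n \to \infty$, allowing $n_j$ to be pushed arbitrarily far out). The $\underline{DN}$ and $\Omega$ hypotheses enter only through Proposition~\ref{p1} and the two-sided estimate $C_1 e^{-a_1\varepsilon_n} \leq d_n(U_q,U_p) \leq C_2 e^{-a_2\varepsilon_n}$ derived in its proof, which underpin the whole asymptotic framework.
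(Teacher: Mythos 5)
Your proof is correct, and it reaches the theorem by a genuinely different route than the paper. The paper funnels both implications through Lemma \ref{l1}, which characterizes $\delta(E)=\delta(\Lambda_{\infty}(\varepsilon))$ by the quantitative condition $\inf_{p}\sup_{q\geq p}\liminf_{n}\varepsilon_{n}(p,q)/\varepsilon_{n}=+\infty$; that lemma is itself proved by equipping $\delta(E)$ with an inductive-limit topology and invoking barrelledness together with the closed graph theorem. You bypass the lemma entirely. For $\Delta(E)=\Delta(\Lambda_{\infty}(\varepsilon))\Rightarrow\delta(E)\subseteq\delta(\Lambda_{\infty}(\varepsilon))$ you use only that each $(e^{R\varepsilon_{n}})$ lies in $\Delta(\Lambda_{\infty}(\varepsilon))=\Delta(E)$ and multiply the two null ratios directly — the paper exploits the same test sequences, but only after passing through the lemma and arguing by contradiction. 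For the converse you replace the lemma's closed-graph argument by an explicit diagonal construction of an element of $\delta(E)\setminus\Lambda_{\infty}(\varepsilon)$. The result is a more elementary, self-contained proof; what it gives up is the reusable characterization of Lemma \ref{l1}, which the paper also quotes later. One small repair is needed in your diagonalization: $c_{q}=\limsup_{n}d_{n}(U_{q},U_{p})e^{M\varepsilon_{n}}$ can equal $+\infty$, since the upper estimate $d_{n}(U_{q},U_{p})\leq C_{2}e^{-a_{2}\varepsilon_{n}}$ does not force $a_{2}\geq M$; before using $c_{q_{j}}/2$ as a threshold and $\log(j/c_{q_{j}})$ in the choice of $n_{j}$, replace $c_{q}$ by $\min(c_{q},1)$, after which every step goes through unchanged.
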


We first need the following lemma for the proof of Theorem \ref{t2}. In \cite[Cor.\ 1.10]{A}, Aytuna proved that for a nuclear Fr\'echet space $E$ with the properties $\underline{DN}$, $\Omega$ and associated exponent sequence $\varepsilon$
\begin{equation}\label{e1}\delta\left(E\right)= \delta\left(\Lambda_{1}\left(\varepsilon\right)\right) \hspace{0.25in}\Leftrightarrow \hspace{0.25in} \inf_{p}\sup_{q\geq p} \limsup_{n\in \mathbb{N}}{\varepsilon_{n}\left(p,q\right)\over \varepsilon_{n}}=0\end{equation}
where $\varepsilon_{n}\left(p,q\right)=-\log{d_{n}\left(U_{q},U_{p}\right)}$. 
~\\ The same characterization can be given for infinite type power series spaces as follows:


\begin{lemma}\label{l1} Let $E$ be a nuclear Fr\'echet space with properties \underline{DN} and $\Omega$ and $\varepsilon=\left(\varepsilon_{n}\right)_{n\in \mathbb{N}}$ be the associated exponent sequence of $E$. Then
$$\displaystyle \delta\left(E\right)=\delta\left(\Lambda_{\infty}\left(\varepsilon\right)\right)\hspace{0.1in}\Leftrightarrow\hspace{0.1in} \inf_{p\in \mathbb{N}}\sup_{q>p}\liminf_{n\in \mathbb{N}}{\varepsilon_{n}\left(p,q\right)\over \varepsilon_{n}}=+\infty$$
where $\varepsilon_{n}\left(p,q\right)=-\log{d_{n}\left(U_{q},U_{p}\right)}$.
\end{lemma}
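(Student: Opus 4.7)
The strategy is to reduce the stated equivalence to a single inclusion. By the last line of Proposition \ref{p1}, one always has $\delta(\Lambda_{\infty}(\varepsilon)) \subseteq \delta(E)$, and since $\delta(\Lambda_{\infty}(\varepsilon)) = \Lambda_{\infty}(\varepsilon)$, the lemma amounts to characterizing when $\delta(E) \subseteq \Lambda_{\infty}(\varepsilon)$. Using the base-of-neighborhoods representations recalled in Section 2, the task becomes entirely concrete: $(t_n) \in \delta(E)$ iff there exists $p$ with $|t_n| e^{\varepsilon_n(p,q)} \to 0$ for every $q > p$, whereas $(t_n) \in \Lambda_{\infty}(\varepsilon)$ iff $|t_n| e^{k\varepsilon_n} \to 0$ for every $k \in \mathbb{N}$. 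So everything reduces to comparing when the rates $\varepsilon_n(p,q)$ eventually dominate every multiple $k\varepsilon_n$.

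For the ``$\Leftarrow$'' direction I would pick any $(t_n) \in \delta(E)$ with witness $p_0$ and fix an arbitrary $k$; the hypothesis $\sup_{q > p_0}\liminf_n \varepsilon_n(p_0,q)/\varepsilon_n = +\infty$ lets me choose $q > p_0$ with $\liminf_n \varepsilon_n(p_0,q)/\varepsilon_n > k$, so $\varepsilon_n(p_0,q) \geq k\varepsilon_n$ for all sufficiently large $n$. Then $|t_n| e^{k\varepsilon_n} \leq |t_n| e^{\varepsilon_n(p_0,q)} \to 0$, which places $(t_n)$ in $\Lambda_{\infty}(\varepsilon)$. This direction is essentially a one-line argument once the translation above is in place.

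For the ``$\Rightarrow$'' direction I would argue contrapositively: assume there exist $p_0$ and a finite $M$ with $\liminf_n \varepsilon_n(p_0, q)/\varepsilon_n \leq M$ for every $q > p_0$, and build a sequence in $\delta(E) \setminus \Lambda_{\infty}(\varepsilon)$. The main obstacle is extracting a \emph{single} index subsequence that witnesses the slow-growth bound uniformly in $q$; this I would handle diagonally. Fix any sequence $q_j \to \infty$ with $q_j > p_0$ and, using the $\liminf$ assumption applied to $q_j$, pick $n_j$ strictly increasing with $\varepsilon_{n_j}(p_0, q_j) \leq (M+1)\varepsilon_{n_j}$. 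Since $(U_p)$ is decreasing, $d_n(U_{q'}, U_{p_0}) \leq d_n(U_q, U_{p_0})$ for $q' \geq q$, so $\varepsilon_n(p_0, q)$ is monotone nondecreasing in $q$; consequently, for every fixed $q > p_0$ and all large $j$ (so that $q_j \geq q$) one has $\varepsilon_{n_j}(p_0, q) \leq \varepsilon_{n_j}(p_0, q_j) \leq (M+1)\varepsilon_{n_j}$.

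Then I would set $t_n := e^{-(M+2)\varepsilon_n}$ for $n \in \{n_j : j \in \mathbb{N}\}$ and $t_n := 0$ otherwise. Along $(n_j)$, eventually $|t_{n_j}| e^{\varepsilon_{n_j}(p_0,q)} \leq e^{-\varepsilon_{n_j}} \to 0$, which (combined with $t_n = 0$ off the subsequence) shows $(t_n) \in \delta(E)$ with witness $p_0$. On the other hand, for $k = M+3$ we get $|t_{n_j}| e^{k\varepsilon_{n_j}} = e^{\varepsilon_{n_j}} \to \infty$, so $(t_n) \notin \Lambda_{\infty}(\varepsilon)$. This contradicts $\delta(E) = \delta(\Lambda_{\infty}(\varepsilon))$, completing the contrapositive and hence the proof.
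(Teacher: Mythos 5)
Your proof is correct, but it follows a genuinely different route from the paper. The paper endows $\delta(E)=\bigcup_{p}\bigcap_{q\geq p}\delta_{pq}$ with its natural inductive-limit topology, observes that this space is barrelled, applies the closed graph theorem to the identity map into $\Lambda_{\infty}(\varepsilon)$ to upgrade the set-theoretic inclusion to a continuous one, and then reads the condition $\inf_{p}\sup_{q> p}\liminf_{n}\varepsilon_{n}(p,q)/\varepsilon_{n}=+\infty$ off the resulting seminorm estimates. You instead argue entirely at the level of sequences: the easy direction is the same elementary domination argument in both treatments, but for the hard direction you replace the closed-graph/barrelledness machinery by an explicit diagonal construction --- using the monotonicity of $\varepsilon_{n}(p_{0},q)$ in $q$ to extract a single index subsequence $(n_{j})$ that witnesses the slow growth uniformly over $q$, and then exhibiting $t_{n_{j}}=e^{-(M+2)\varepsilon_{n_{j}}}$ (zero elsewhere) as an element of $\delta(E)\setminus\Lambda_{\infty}(\varepsilon)$. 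This is a sound substitute: the diagonal extraction is precisely the uniformity that the closed graph theorem delivers in the paper's version. Your approach is more self-contained and elementary; the paper's is shorter modulo the quoted machinery and runs parallel to Aytuna's proof of the finite-type characterization (3.1). Two cosmetic points: the always-valid inclusion $\delta(\Lambda_{\infty}(\varepsilon))\subseteq\delta(E)$ comes from the displayed remark following Proposition \ref{p1} (it is a consequence of the two-sided estimate $C_{1}e^{-a_{1}\varepsilon_{n}}\leq d_{n}(U_{q},U_{p})\leq C_{2}e^{-a_{2}\varepsilon_{n}}$), not from the statement of Proposition \ref{p1} itself, which concerns $\Delta$; and your $k=M+3$ should be replaced by an integer exceeding $M+2$ since $M$ need not be integral --- neither affects the argument.
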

\begin{proof} Approximate diametral dimension $\delta\left(E\right)$ can be written as $$\displaystyle \delta\left(E\right)= \bigcup_{p}\bigcap_{q\geq p}\delta_{pq}$$ where $\displaystyle \delta_{pq}=\left\{ \left(t_{n}\right)_{n\in \mathbb{N}}: \sup_{n\in \mathbb{N}}{\left|t_{n}\right|\over d_{n}\left(U_{q},U_{p}\right)}<+\infty\right\}$ is a Banach space with norms $\displaystyle \left|t_{n}\right|_{pq}=\sup_{n\in \mathbb{N}}{\left|t_{n}\right|\over d_{n}\left(U_{q},U_{p}\right)}$. Namely, approximate diametral dimension can be equipped with the topological inductive limit of Fr\'echet spaces. Then, the approximate diametral dimension with this topology is barrelled. On the other hand, the inclusion $\delta\left(E\right)\subseteq \delta\left(\Lambda_{\infty}\left(\varepsilon\right)\right)=\Lambda_{\infty}\left(\varepsilon\right)$ gives us that the identity mapping $i:\delta\left(E\right)\rightarrow\Lambda_{\infty}\left(\varepsilon\right)$ has a closed graph. Since $\delta\left(E\right)$ is barrelled, by using Theorem 5 of \cite{T}, we conclude that the identity mapping is continuous. Therefore,
~\\ \begin{equation*}
\begin{split}
& \delta\left(E\right)=\bigcup_{p}\bigcap_{q\geq p}\delta_{pq}\hookrightarrow \Lambda_{\infty}\left(\varepsilon\right) \hspace{0.1in}\textnormal{is continuous}\hspace{0.1in} \Leftrightarrow \hspace{0.1in}\forall p \hspace{0.15in}\bigcap_{q\geq p}\delta_{pq}\hookrightarrow \Lambda_{\infty}\left(\varepsilon \right) \hspace{0.1in}\textnormal{is continuous} \vspace{1in}
\\ & \Leftrightarrow \hspace{0.1in}\forall p \hspace{0.1in}\forall R>1 \hspace{0.1in}\exists q\geq p,\hspace{0.1in} C>0 \hspace{0.1in}\sup_{n\in \mathbb{N}}\left|t_{n}\right|R^{\varepsilon_{n}}\leq C\sup_{n\in \mathbb{N}}{\left|t_{n}\right|\over d_{n}\left(U_{q},U_{p}\right)}\hspace{0.1in} \forall \left(t_{n}\right)\in \delta\left(E\right) \vspace{2in}
\\  & \Leftrightarrow \hspace{0.1in}\forall p \hspace{0.1in}\forall R>1 \hspace{0.1in}\exists q\geq p, C>0\hspace{0.1in} R^{\varepsilon_{n}}\leq {C\over d_{n}\left(U_{q},U_{p}\right)} \hspace{0.1in}\forall n\in \mathbb{N} \vspace{2in}
\\  & \Leftrightarrow \hspace{0.1in} \forall p \hspace{0.1in}\forall R>1 \hspace{0.1in}\ln{R}\leq \sup_{q\geq p}\liminf_{n\in \mathbb{N}} {\varepsilon_{n}\left(p,q\right)\over \varepsilon_{n}}\vspace{2in}
\\ & \Leftrightarrow \hspace{0.1in} \forall p \hspace{0.1in} \sup_{q\geq p}\liminf_{n\in \mathbb{N}} {\varepsilon_{n}\left(p,q\right)\over \varepsilon_{n}}=+\infty \hspace{0.2in} \Leftrightarrow \hspace{0.2in} \inf_{p\in \mathbb{N}}\sup_{q\geq p}\liminf_{n\in \mathbb{N}} {\varepsilon_{n}\left(p,q\right)\over \varepsilon_{n}}=+\infty.
\end{split}
\end{equation*}
Now since $\delta\left(E\right)\supseteq \delta\left(\Lambda_{\infty}\left(\varepsilon\right)\right)$ always holds for the associated exponent sequence $\varepsilon$ of $E$, we have
$$\delta\left(E\right)= \delta\left(\Lambda_{\infty}\left(\varepsilon\right)\right)\hspace{0.25in} \Leftrightarrow\hspace{0.25in}\displaystyle \inf_{p\in \mathbb{N}}\sup_{q\geq p}\liminf_{n\in \mathbb{N}} {\varepsilon_{n}\left(p,q\right)\over \varepsilon_{n}}=+\infty,$$
as desired.
\end{proof}

\begin{proof*} For the proof of necessity part, assume that $\delta\left(E\right)= \delta\left(\Lambda_{\infty}\left(\varepsilon\right)\right)$. By Lemma \ref{l1}, 
$\displaystyle \inf_{p\in \mathbb{N}}\sup_{q>p}\liminf_{n\in \mathbb{N}}{\varepsilon_{n}\left(p,q\right)\over \varepsilon_{n}}=+\infty$. Then we have 
$$\forall p \hspace{0.1in} \forall M>0 \hspace{0.1in} \exists q\geq p \hspace{0.1in}\liminf_{n\in \mathbb{N}}{\varepsilon_{n}\left(p,q\right)\over \varepsilon_{n}}\geq M$$
and
$$ \hspace{1.65in}\forall p \hspace{0.1in} \forall M>0 \hspace{0.1in} \exists q\geq p \hspace{0.1in} d_{n}\left(U_{q}, U_{p}\right)\leq e^{{-M\varepsilon_{n}}} \hspace{1.5in}\forall n\in \mathbb{N} \hspace{0.05in}\left(*\right).$$ 
Now, if we take $\left(x_{n}\right)_{n\in \mathbb{N}}\in \Delta\left(\Lambda_{\infty}\left(\varepsilon\right)\right)$, then there exists a $S>0$ such that $\displaystyle \sup_{n\in \mathbb{N}} \left|x_{n}\right|e^{{-S\varepsilon_{n}}}<+\infty$ which means that there exists a $C>0$ such that for every $n\in \mathbb{N}$
$$\left|x_{n}\right|\leq Ce^{{S\varepsilon_{n}}}.$$
Now, for a fixed $p$ and the number $S$, from $\left(*\right)$ we can find a $q\geq p$ such that for every $n\in \mathbb{N}$
$$\left|x_{n}\right|d_{n}\left(U_{q},U_{p}\right)\leq Ce^{{S\varepsilon_{n}}}e^{{-S\varepsilon_{n}}}=C.$$
Then, $\left(x_{n}\right)_{n\in \mathbb{N}}\in \Delta\left(E\right)$ and so $\Delta\left(\Lambda_{\infty}\left(\varepsilon\right)\right)\subseteq \Delta\left(E\right)$. But then since we always have $\Delta\left(E\right)\subseteq \Delta\left(\Lambda_{\infty}\left(\varepsilon\right)\right)$, we obtain $\Delta\left(E\right)=\Delta\left(\Lambda_{\infty}\left(\varepsilon\right)\right).$ 
~\\  To prove the sufficiency part, assume $\Delta\left(E\right)=\Delta\left(\Lambda_{\infty}\left(\varepsilon\right)\right)$ and $\delta\left(E\right)\neq\delta\left(\Lambda_{\infty}\left(\varepsilon\right)\right).$ 
\begin{equation*}
\begin{split}
& \delta\left(E\right)\neq\delta\left(\Lambda_{\infty}\left(\varepsilon\right)\right) \hspace{0.1in} \Leftrightarrow \hspace{0.1in} \exists p \hspace{0.05in} \sup_{q\geq p}\liminf_{n\in \mathbb{N}}{\varepsilon_{n}\left(p,q\right)\over \varepsilon_{n}}<+\infty 
\\ & \Leftrightarrow \exists p \hspace{0.1in} \exists M>0 \hspace{0.1in} \sup_{q\geq p}\liminf_{n\in \mathbb{N}}{\varepsilon_{n}\left(p,q\right)\over \varepsilon_{n}}\leq M
\\ & \Leftrightarrow\exists p \hspace{0.1in} \exists M>0 \hspace{0.1in} \forall q\geq p \hspace{0.1in}\liminf_{n\in \mathbb{N}}{\varepsilon_{n}\left(p,q\right)\over \varepsilon_{n}}\leq M
\\ & \Leftrightarrow \exists p \hspace{0.1in} \exists M>0 \hspace{0.1in} \forall q\geq p \hspace{0.1in} \exists I_{q}\subseteq \mathbb{N}\hspace{0.1in} d_{n}\left(U_{q},U_{p}\right)\geq e^{-{M\varepsilon_{n}}}\hspace{0.35in} \forall n\in I_{q}
\end{split}
\end{equation*}
Now since $\displaystyle\Delta\left(E\right)=\Delta\left(\Lambda_{\infty}\left(\varepsilon\right)\right)= {\Lambda_{\infty}\left(\varepsilon\right)}^{\prime}=\left\{\left(x_{n}\right)_{n\in \mathbb{N}}: \exists R>0 \hspace{0.1in} \displaystyle \sup_{n\in \mathbb{N}}\left|x_{n}\right|e^{-R{\varepsilon_{n}}}<+\infty\right\}$, for every $R>0$, we have $\displaystyle e^{R{\varepsilon_{n}}}\in {\Lambda_{\infty}\left(\varepsilon\right)}^{\prime}=\Delta\left(E\right)$. Therefore, for the above $p$, we can find a $\tilde{q}>p$, such that
$$\sup_{n\in \mathbb{N}}e^{R{\varepsilon_{n}}}d_{n}\left(U_{\tilde{q}}, U_{p}\right)<+\infty.$$
Then for every $n\in I_{\tilde{q}}$, we obtain
$$e^{\left(R-M\right){\varepsilon_{n}}}\leq e^{R{\varepsilon_{n}}}d_{n}\left(U_{\tilde{q}}, U_{p}\right)\leq \sup_{n\in \mathbb{N}}e^{R{\varepsilon_{n}}}d_{n}\left(U_{\tilde{q}}, U_{p}\right)<+\infty.$$
But then if we choose $R>M$, we have a contradiction. Hence $\Delta\left(E\right)= \Delta\left(\Lambda_{\infty}\left(\varepsilon\right)\right)$ implies $\delta\left(E\right)= \delta\left(\Lambda_{\infty}\left(\varepsilon\right)\right)$, as desired.
\end{proof*}

\section{Results in the finite case}
In this section, we turn our attention to the finite type power series case and, as a main result, we prove that Question \ref{q1} is true in case the nuclear Fr\'echet space $E$ has a prominent bounded subset. 
~\\ We begin this section by giving the following proposition which answers Question \ref{q1} in one direction.
\begin{prop}\label{p2} Let $E$ be a nuclear Fr\'echet space with properties \underline{DN} and $\Omega$ and $\varepsilon=\left(\varepsilon_{n}\right)_{n\in \mathbb{N}}$ be the associated exponent sequence of $E$. Then $\delta\left(E\right)= \delta\left(\Lambda_{1}\left(\varepsilon\right)\right)$ implies $\Delta\left(E\right)= \Delta\left(\Lambda_{1}\left(\varepsilon\right)\right)$.
\end{prop}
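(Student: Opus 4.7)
The plan is to establish the reverse inclusion $\Delta(E) \subseteq \Delta(\Lambda_{1}(\varepsilon)) = \Lambda_{1}(\varepsilon)$, since the opposite inclusion $\Delta(\Lambda_{1}(\varepsilon)) \subseteq \Delta(E)$ is already provided by Proposition~\ref{p1}. The main input is the characterization (\ref{e1}) of Aytuna recalled at the beginning of this section. Under the hypothesis $\delta(E) = \delta(\Lambda_{1}(\varepsilon))$, equation (\ref{e1}) reads $\inf_{p} \sup_{q\geq p} \limsup_{n} \varepsilon_{n}(p,q)/\varepsilon_{n} = 0$, which I will use in the following explicit form: for every $\eta > 0$ there exists $p = p(\eta) \in \mathbb{N}$ such that $\limsup_{n} \varepsilon_{n}(p,q)/\varepsilon_{n} \leq \eta$ for every $q \geq p$. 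Equivalently, once such $p$ is fixed, each $q \geq p$ satisfies $d_{n}(U_{q},U_{p}) \geq e^{-2\eta\varepsilon_{n}}$ for all sufficiently large $n$.

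With this reformulation the argument is a direct estimate modelled on the necessity half of Theorem~\ref{t2}. Fix an arbitrary $(t_{n}) \in \Delta(E)$; to prove $(t_{n}) \in \Lambda_{1}(\varepsilon)$ it suffices to show that $\sup_{n} |t_{n}| e^{-\varepsilon_{n}/k} < +\infty$ for every $k \in \mathbb{N}$. Given $k$, I apply the reformulated hypothesis with $\eta = 1/(2k)$ to obtain $p$. Since $(t_{n}) \in \Delta(E)$, this $p$ admits some $q > p$ with $\lim_{n} t_{n} d_{n}(U_{q},U_{p}) = 0$, so that $|t_{n}| \, d_{n}(U_{q},U_{p}) \leq C$ for some constant $C > 0$ and every $n$. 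For that same $q$, the bound $\limsup_{n} \varepsilon_{n}(p,q)/\varepsilon_{n} \leq 1/(2k)$ yields $d_{n}(U_{q},U_{p}) \geq e^{-\varepsilon_{n}/k}$ for all $n \geq N$, so $|t_{n}| \leq C e^{\varepsilon_{n}/k}$ on that tail; a larger constant $C_{k}$ absorbs the first $N$ terms, giving the desired bound. Since $k$ was arbitrary, $(t_{n}) \in \Lambda_{1}(\varepsilon)$, which completes the inclusion $\Delta(E) \subseteq \Lambda_{1}(\varepsilon)$.

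I do not expect a substantial obstacle here: once (\ref{e1}) is invoked, the entire argument is a short quantifier chase. The only point requiring care is that in (\ref{e1}) the index $p$ is produced \emph{before} $q$ is chosen, while in the defining condition of $\Delta(E)$ the witness $q$ depends on the outer $p$; fortunately (\ref{e1}) delivers a single $p = p(\eta)$ that works simultaneously for every $q \geq p$, so one can first fix $p$ via (\ref{e1}) and then let the $\Delta(E)$-condition produce the matching $q$. It is precisely this compatibility of quantifier order that makes the implication direct, whereas the converse direction -- the focus of the remainder of Section~4 -- is subtler and requires the additional input of a prominent bounded subset in $E$.
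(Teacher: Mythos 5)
Your proof is correct and follows essentially the same route as the paper: both invoke Aytuna's characterization (\ref{e1}) to produce, for each target rate, a single $p$ whose lower diameter bound holds for \emph{every} $q\geq p$, and then let the defining condition of $\Delta(E)$ supply the witness $q$, yielding $\Delta(E)\subseteq\Lambda_{1}(\varepsilon)$; the reverse inclusion is Proposition~\ref{p1}. The quantifier-order point you flag is exactly the crux of the paper's argument as well.
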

\begin{proof} Let us assume that $\delta\left(E\right)= \delta\left(\Lambda_{1}\left(\varepsilon\right)\right)$. From Corollary 1.10 of \cite{A}, we have
\begin{equation*}
\begin{split}
\delta\left(E\right)= \delta\left(\Lambda_{1}\left(\varepsilon\right)\right) \hspace{0.1in} &\Leftrightarrow \hspace{0.1in} \inf_{p}\sup_{q\geq p} \limsup_{n\in \mathbb{N}}{\varepsilon_{n}\left(p,q\right)\over \varepsilon_{n}}=0
\\ & \Leftrightarrow \forall r>0 \hspace{0.1in} \exists p \hspace{0.1in}\forall q\geq p \hspace{0.1in} \limsup_{n\in \mathbb{N}}{\varepsilon_{n}\left(p,q\right)\over \varepsilon_{n}}\leq r
\\ & \Leftrightarrow \forall r>0 \hspace{0.1in} \exists p \hspace{0.1in}\forall q\geq p \hspace{0.1in} \exists n_{0}\in \mathbb{N} \hspace{0.1in}\forall n\geq n_{0} \hspace{0.1in} d_{n}\left(U_{q},U_{p}\right)\geq e^{-r{\varepsilon_{n}}}.
\end{split}
\end{equation*}
Now, we take $\left(x_{n}\right)_{n\in \mathbb{N}}\in \Delta\left(E\right)$ and for the above $p$, we find a $\tilde{q}> p$ such that
$$\sup_{n\in \mathbb{N}} \left|x_{n}\right|d_{n}\left(U_{\tilde{q}},U_{p}\right)<+\infty$$
and from the above inequality, we obtain
$$\left|x_{n}\right|e^{{-r\varepsilon_{n}}}\leq \sup_{n\in \mathbb{N}} \left|x_{n}\right|d_{n}\left(U_{\tilde{q}},U_{p}\right)$$
for large n, this means that $\left(x_{n}\right)_{n\in \mathbb{N}}\in \Delta\left(\Lambda_{1}\left(\varepsilon\right)\right)$ and so $\Delta\left(E\right)\subseteq \Delta\left(\Lambda_{1}\left(\varepsilon\right)\right)$. But then since $\Delta\left(E\right)\supseteq \Delta\left(\Lambda_{1}\left(\varepsilon\right)\right)$, we have $\Delta\left(E\right)= \Delta\left(\Lambda_{1}\left(\varepsilon\right)\right).$
\end{proof}
\begin{thm}\label{t4} Let $E$ be a nuclear Fr\'echet space with properties \underline{DN} and $\Omega$ and $\varepsilon=\left(\varepsilon_{n}\right)_{n\in \mathbb{N}}$ be the associated exponent sequence of $E$. If $\Delta\left(E\right)$, with the canonical topology, is barrelled, then $\Delta\left(E\right)=\Delta\left(\Lambda_{1}\left(\varepsilon\right)\right)$ if and only if $\delta\left(E\right)=\delta\left(\Lambda_{1}\left(\varepsilon\right)\right)$.
\end{thm}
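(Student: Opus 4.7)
The forward implication $\delta(E) = \delta(\Lambda_{1}(\varepsilon)) \Rightarrow \Delta(E) = \Delta(\Lambda_{1}(\varepsilon))$ is Proposition \ref{p2} and requires no barrelledness assumption, so only the converse needs attention. The plan for the converse is to apply a closed graph argument to the identity map $\iota: \Delta(E) \to \Lambda_{1}(\varepsilon)$ and then translate the resulting continuity into the characterization of $\delta(E) = \delta(\Lambda_{1}(\varepsilon))$ supplied by equation (\ref{e1}).

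Under the hypothesis $\Delta(E) = \Delta(\Lambda_{1}(\varepsilon)) = \Lambda_{1}(\varepsilon)$ the two sets of sequences coincide, so $\iota$ is well defined. Its graph is closed, since both the canonical topology on $\Delta(E)$ and the Fr\'echet topology on $\Lambda_{1}(\varepsilon)$ refine coordinate-wise convergence: in $\Lambda_{1}(\varepsilon)$ this is immediate, while in each Banach step $\Delta(U_{q}, U_{p})$ the norm $\sup_{n} |t_{n}| d_{n}(U_{q}, U_{p})$ controls every coordinate because $d_{n}(U_{q}, U_{p}) > 0$ for fixed $n$. Since $\Delta(E)$ is barrelled by hypothesis and $\Lambda_{1}(\varepsilon)$ is Fr\'echet, the closed graph theorem for barrelled domains with Fr\'echet codomains yields the continuity of $\iota$.

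Next, I would unwind this continuity. Write $\Delta(E) = \mathrm{proj}_{p}\, \Delta_{p}$ with $\Delta_{p} = \mathrm{ind}_{q > p}\, \Delta(U_{q}, U_{p})$. Since the inclusions $\Delta_{p+1} \hookrightarrow \Delta_{p}$ are continuous and the chain is directed, every continuous seminorm on $\Delta(E)$ is dominated by one pulled back from a single $\Delta_{p}$, and every continuous seminorm on the LB-space $\Delta_{p}$ is dominated on each Banach step by a multiple of the step-norm. Continuity of $\iota$ therefore reads: for every $k \in \mathbb{N}$ there exists $p$ such that for every $q > p$ one finds $C_{q} > 0$ with
\[
\sup_{n} |t_{n}| e^{-\varepsilon_{n}/k} \leq C_{q} \sup_{n} |t_{n}| d_{n}(U_{q}, U_{p})
\]
for every $(t_{n}) \in \Delta(U_{q}, U_{p})$. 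Testing at the unit sequences $(\delta_{nm})_{n}$, which lie in $\Delta(E) \cap \Delta(U_{q}, U_{p})$, gives $\varepsilon_{m}(p,q) \leq \varepsilon_{m}/k + \log C_{q}$ for every $m$, hence $\limsup_{m} \varepsilon_{m}(p,q)/\varepsilon_{m} \leq 1/k$. As $k$ is arbitrary this yields $\inf_{p} \sup_{q > p} \limsup_{m} \varepsilon_{m}(p,q)/\varepsilon_{m} = 0$, which by equation (\ref{e1}) characterizes $\delta(E) = \delta(\Lambda_{1}(\varepsilon))$.

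The hard part will be the accurate extraction of the displayed single-$p$, per-$q$ Banach-step estimate from the mere continuity of $\iota$: this blends the projective-limit structure (to collapse the parameter $p$) with the LB structure of each $\Delta_{p}$ (to control the parameter $q$). Once that estimate is secured, the remaining evaluation at unit sequences and appeal to \eqref{e1} are routine.
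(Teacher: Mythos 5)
Your proposal is correct and follows essentially the same route as the paper: identify $\Delta(E)=\Delta(\Lambda_{1}(\varepsilon))=\Lambda_{1}(\varepsilon)$, observe the inclusion has closed graph because both topologies refine coordinate-wise convergence, invoke the closed graph theorem for a barrelled domain and Fr\'echet codomain, unwind the continuity through the projective-limit-of-LB structure to get $e^{-t\varepsilon_{n}}\leq C\,d_{n}(U_{q},U_{p})$, and conclude via the characterization \eqref{e1}. Your treatment of the quantifier extraction (collapsing to a single $p$ by directedness, then a per-$q$ constant on each Banach step) is the step the paper leaves implicit, and you handle it correctly.
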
 
\begin{proof} The proof of the necessity part follows from Proposition \ref{p2}. To prove the sufficiency part, let $\Delta\left(E\right)=\Delta\left(\Lambda_{1}\left(\varepsilon\right)\right)$ and assume that $\Delta\left(E\right)$ with the canonical topology be barrelled. Then since the convergence in $\Delta\left(E\right)$ implies the coordinate-wise convergence, the inclusion $\Delta\left(E\right)\hookrightarrow\Lambda_{1}\left(\varepsilon\right)$ has a closed graph. But then since $\Delta\left(E\right)$ is barrelled, the inclusion map $\Delta\left(E\right)\hookrightarrow\Lambda_{1}\left(\varepsilon\right)$ is continuous by Theorem 5 in \cite{T}. Taking into account that $\displaystyle \Delta\left(E\right)$ is the projective limit of inductive limits of Banach spaces $\displaystyle \bigcap_{p\in \mathbb{N}}\bigcup_{q\geq p+1} \Delta\left(U_{q},U_{p}\right)$, the continuity of the inclusion map $\displaystyle \bigcap_{p\in \mathbb{N}}\bigcup_{q\geq p+1} \Delta\left(U_{q},U_{p}\right)\hookrightarrow\Lambda_{1}\left(\varepsilon\right)$ gives us
$$\forall t>0\hspace{0.1in}\exists p\hspace{0.1in} \forall q>p \hspace{0.1in}\exists C>0 \hspace{0.1in} \forall n\in \mathbb{N} \hspace{0.2in}e^{-t\varepsilon_{n}}\leq C\hspace{0.05in}d_{n}\left( U_{q}, U_ {p}\right).$$
This implies $\displaystyle \inf_{p\in \mathbb{N}}\sup_{q>p}\limsup_{n\in \mathbb{N}}{ \varepsilon_{n} (p,q)\over \varepsilon_{n}}=0$, so $\delta\left(E\right)=\delta\left(\Lambda_{1}\left(\varepsilon\right)\right)$, as desired.
\end{proof}
 It is worth the note that, by Theorem \ref{t1}, the barrelledness of the canonical topology of $\Delta\left(E\right)$ is equivalent to the following condition (wQ):
$$\displaystyle \forall N\hspace{0.075in}\exists\hspace{0.075in}M, n\hspace{0.075in}\forall K, m, \hspace{0.075in}\exists\hspace{0.075in} k, S>0:\hspace{0.075in} \min\left(d_{n}\left(U_{n}, U_{N}\right), d_{n}\left(U_{k},U_{K}\right)\right)\leq S d_{n}\left(U_{m}, U_{M}\right)\hspace{0.25in} \forall n\in \mathbb{N}.$$ 
But determining the barrelledness of $\Delta (E)$ is not easy, in practice. In the following proposition, by posing a condition
\\ \textbf{Condition} $\boldsymbol{\mathbb{A}}$: $\hspace{0.1in}\forall$
p , $\hspace{0.1in}\forall$ $q>p$, $\hspace{0.1in}\exists$ $s>q$, $\hspace{0.1in}\forall k>s$, $\hspace{0.1in}\exists C>0$ $\hspace{0.15in}d_{n}\left(U_{q}, U_{p}\right)\leq C d_{n}\left(U_{k}, U_{s}\right)$ $\hspace{0.15in}\forall n\in \mathbb{N}$.
\\on diameters, we eliminate the barrelledness condition of Theorem 4.2.

\begin{prop}\label{p3} Let $E$ be a nuclear Fr\'echet space with the properties \underline{DN} and $\Omega$ and $\varepsilon$ be the associated exponent sequence of $E$. If $E$ satisfies the condition $\mathbb{A}$
and $\Delta\left(E\right)=\Delta\left(\Lambda_{1}\left(\varepsilon\right)\right)$, then $\delta\left(E\right)=\delta\left(\Lambda_{1}\left(\varepsilon\right)\right)$.
\end{prop}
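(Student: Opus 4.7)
The plan is to derive Condition (wQ) of Theorem~\ref{t1} from Condition $\mathbb{A}$; by Theorem~\ref{t1} this gives barrelledness of $\Delta(E)$ in the canonical topology, at which point Theorem~\ref{t4} applied to the hypothesis $\Delta(E)=\Delta(\Lambda_{1}(\varepsilon))$ yields $\delta(E)=\delta(\Lambda_{1}(\varepsilon))$.

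The heart of the argument is the verification of (wQ). Given $N\in\mathbb{N}$, I apply Condition $\mathbb{A}$ to the pair $(p,q)=(N,N+1)$ to obtain an index $s_{0}>N+1$ with the property that for every $k>s_{0}$ there is $C_{k}>0$ such that $d_{n}(U_{N+1},U_{N})\le C_{k}\,d_{n}(U_{k},U_{s_{0}})$ for all $n\in\mathbb{N}$. I choose $M:=s_{0}$ and take $N+1$ as the auxiliary index in the existential of (wQ). Now, given any $K$ and $m\ge M$, I select $k$ with $k>\max(K,m,s_{0})$. Since $k>m$ gives $U_{k}\subseteq U_{m}$, the standard monotonicity $d_{n}(V,U)\le d_{n}(V',U)$ whenever $V\subseteq V'$ yields $d_{n}(U_{k},U_{s_{0}})\le d_{n}(U_{m},U_{s_{0}})=d_{n}(U_{m},U_{M})$. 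Combining with the estimate from $\mathbb{A}$:
\[
d_{n}(U_{N+1},U_{N})\le C_{k}\,d_{n}(U_{m},U_{M})\quad\text{for all }n\in\mathbb{N},
\]
so a fortiori
\[
\min\bigl(d_{n}(U_{N+1},U_{N}),\,d_{n}(U_{k},U_{K})\bigr)\le C_{k}\,d_{n}(U_{m},U_{M}),
\]
which is exactly (wQ) with $S=C_{k}$.

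Once (wQ) is verified, Theorem~\ref{t1} yields barrelledness of $\Delta(E)$, and the sufficiency direction of Theorem~\ref{t4} closes the argument. The main obstacle is the verification of (wQ), whose nested quantifier alternation $\forall N\,\exists M,n\,\forall K,m\,\exists k,S$ obscures the correct order of choices. The key idea is to apply $\mathbb{A}$ to the trivial consecutive pair $(N,N+1)$ at the very start, thereby fixing $M$ and the auxiliary index before the universal quantifiers in $K$ and $m$ come into play; monotonicity of Kolmogorov diameters in the first argument then handles the transition from the pair $(U_{k},U_{s_{0}})$ supplied by $\mathbb{A}$ to the pair $(U_{m},U_{M})$ demanded by (wQ).
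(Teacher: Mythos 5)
Your proof is correct, but it takes a genuinely different route from the paper's. The paper argues by contradiction and does not go through barrelledness at all: assuming $\delta(E)\neq\delta(\Lambda_{1}(\varepsilon))$, it uses Aytuna's characterization (3.1) to produce infinitely many ``bad'' index sets, iterates Condition $\mathbb{A}$ to build a chain $d_{n}(U_{q_{1}},U_{1})\leq C_{1}d_{n}(U_{q_{3}},U_{q_{2}})\leq C_{2}d_{n}(U_{q_{5}},U_{q_{4}})\leq\cdots$, forms the Fr\'echet space $\bigcap_{k}B_{k}$ sitting inside $\Delta(E)=\Lambda_{1}(\varepsilon)$, and applies the closed graph theorem to contradict the bad estimates. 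You instead verify condition (wQ) and reduce everything to Theorem~\ref{t4} via Theorem~\ref{t1}. Your verification is sound: since $\min(a,b)\leq a$ it suffices to dominate the first entry of the minimum, and Condition $\mathbb{A}$ applied to the pair $(N,N+1)$ supplies exactly the data $(M,n)=(s_{0},N+1)$ with $d_{j}(U_{N+1},U_{N})\leq C_{k}\,d_{j}(U_{k},U_{s_{0}})\leq C_{k}\,d_{j}(U_{m},U_{s_{0}})$ for $k>\max(K,m,s_{0})$; this is structurally the same computation the paper itself performs in the proof of Theorem~\ref{t5} when deriving (wQ) from $D_{2}$. Two small remarks: you tacitly read (wQ) with $m>M$, which is the intended reading given the PLB structure and is also how the paper uses it in Theorem~\ref{t5}; and the detour through $k>\max(K,m,s_{0})$ plus monotonicity could be shortened by applying $\mathbb{A}$ directly with $k=m$ once $m>s_{0}$, reserving a large $k$ only for the $d_{j}(U_{k},U_{K})$ slot. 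As for what each approach buys: yours is shorter and exposes Proposition~\ref{p3} as a corollary of Theorem~\ref{t4}, since it shows that Condition $\mathbb{A}$ actually \emph{implies} barrelledness of $\Delta(E)$ --- which runs somewhat against the paper's stated motivation of ``eliminating'' the barrelledness hypothesis --- whereas the paper's argument is self-contained modulo the closed graph theorem and does not invoke the Agethen--Bierstedt--Bonet characterization of barrelled weighted PLB-spaces.
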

\begin{proof} Suppose that $E$ satisfies the condition $\mathbb{A}$ and $\Delta\left(E\right)=\Delta\left(\Lambda_{1}\left(\varepsilon\right)\right)$. If $\delta\left( E\right)\neq \delta\left(\Lambda_{1}\left( \varepsilon\right) \right)$, then from Corollary 1.10 of \cite{A} we have the following condition:
\begin{equation}\hspace{1in}\exists M>0 \hspace{0.1in} \forall p \hspace{0.1in} \exists q_{p}>p,\hspace{0.05in} I_{p}\subseteq \mathbb{N} \hspace{0.15in} d_{n}\left(U_{q}, U_{p} \right)<e^{-M\varepsilon_{n}}\hspace{0.2in} \forall n\in I_{p} \end{equation}
For p=1, there exists a number $q_{1}$ and an infinite subset $I_{1}$ so that for all $n\in I_{1}$
$$d_{n}\left(U_{q_{1}}, U_{p} \right)<e^{-M\varepsilon_{n}},$$
and so it follows from the condition $\mathbb{A}$ that we have a number  $q_{2}$ such that for all $k\geq q_{2}$ there exists a $C>0$ so that for all $n\in \mathbb{N}$
$$d_{n}\left(U_{q_{1}}, U_{1}\right)\leq C d_{n}\left(U_{k}, U_{q_{2}}\right)$$ holds. Then, from the inequality 4.1, there exists a number $q_{3}$ and an infinite subset $I_{2}$ so that for all $n\in I_{2}$
$$d_{n}\left(U_{q_{3}}, U_{q_{2}} \right)<e^{-M\varepsilon_{n}}.$$
It follows that there exists a $C_{1}>0$ so that for all $n\in \mathbb{N}$
$$d_{n}\left(U_{q_{1}}, U_{1}\right)\leq C_{1} d_{n}\left(U_{q_{3}}, U_{q_{2}}\right)$$
holds. 
Now applying the same process for $q_{2}$ and $q_{3}$, we can find $q_{4}$, $q_{5}$ and $C_{2}>0$ such that
$$d_{n}\left(U_{q_{3}}, U_{q_{2}}\right)\leq C_{2} d_{n}\left(U_{q_{5}}, U_{q_{4}}\right),$$
for all $n\in \mathbb{N}$. Continuing in this way, we can find the sequences $\left\{q_{k}\right\}_{k\in \mathbb{N}}$ and $\left\{C_{k}\right\}_{k\in \mathbb{N}}$ satisfying 
\begin{equation}
\displaystyle d_{n}\left(U_{q_{{1}}}, U_{1}\right)\leq C_{1} d_{n}\left(U_{q_{{3}}}, U_{q_{{2}}}\right)\leq C_{2} d_{n}\left(U_{q_{{5}}}, U_{q_{{4}}}\right)\leq \cdots \leq C_{k} d_{n}\left(U_{q_{{2k+1}}}, U_{q_{{2k}}}\right)\leq \cdots 
\end{equation}
for all $n\in \mathbb{N}$. Moreover, for each $k\in \mathbb{N}$, there exists a $I_{k}\subseteq \mathbb{N}$ so that 
\begin{equation}
d_{n}\left(U_{q_{{2k+1}}}, U_{q_{{2k}}}\right) <e^{-M\varepsilon_{n}}
\end{equation}
for all $n\in I_{k}$.
~\\ Now, for each $k\in \mathbb{N}$, we define 
$$B_{k}=\left\lbrace x=\left(x_{n} \right): \sup_{n\in \mathbb{N}} C_{k}\vert x_{n}\vert d_{n}\left( U_{q_{{2k+1}}}, U_{q_{{2k}}}\right)<+\infty  \right\rbrace,$$
where $B_{k}$ is a Banach space under the norm $\displaystyle \Vert x\Vert_{k}=\sup_{n\in \mathbb{N}}C_{k}\vert x_{n}\vert d_{n}\left( U_{q_{{2k+1}}}, U_{q_{{2k}}}\right)$ for all $k\in \mathbb{N}$. By the inequality 4.2, we have  $B_{k+1}\subseteq B_{k}$ and $\Vert\cdot\Vert_{k}\leq \Vert\cdot\Vert_{k+1}$ for all $k\in \mathbb{N}$. Since $\left(q_{k}\right)_{k\in \mathbb{N}}$ is strictly increasing and unbounded, for all $p\in \mathbb{N}$, there exists a $k_{0}\in \mathbb{N}$ such that $\displaystyle q_{2k_{0}}>p$ and this gives us $\displaystyle U_{q_{{2k_{0}}}}\subseteq U_{p}$. For all $n\in \mathbb{N}$
$$ d_{n}\left(U_{q_{{2k_{0}+1}}}, U_{p}\right)\leq d_{n}\left(U_{q_{{2k_{0}+1}}}, U_{q_{{2k_{0}}}}\right),$$
which means that $\displaystyle \bigcap_{k}B_{k}\subseteq \Delta\left( E\right)$. Moreover, the equality $\Delta\left( X\right)= \Lambda_{1}\left( \varepsilon\right)$ yields a continuous imbedding of the projective limit  $\displaystyle \bigcap_{k}B_{k}$ into $ \Lambda_{1}\left( \varepsilon\right)$. Then since $\displaystyle\bigcap_{k}B_{k}$ and $\Lambda_{1}\left( \varepsilon\right)$ are Fr\'echet spaces and the imbedding map has a closed graph, by Theorem 5 in \cite{T}, this map is continuous and so
\begin{equation*}
\begin{split}
\bigcap_{k}B_{k}\hookrightarrow \Delta\left(\Lambda_{1}\left( \varepsilon\right) \right)\hspace{0.05in} \textnormal{is continuous} \hspace{0.05in}&\Leftrightarrow \hspace{0.1in}  \forall t>0 \hspace{0.1in} \exists k, C \hspace{0.15in} \sup_{n}\vert x_{n}\vert e^{-t\varepsilon_{n}}\leq C\hspace{0.05in}\sup_{n\in \mathbb{N}} \vert x_{n}\vert d_{n}\left(U_{q_{{2k+1}}}, U_{q_{{2k}}}\right)\\ & \hspace{0.35in}\forall \left( x_{n}\right)\in \bigcap_{p}B_{p} \\
& \Leftrightarrow \hspace{0.15in}  \forall t>0 \hspace{0.15in} \exists k, C \hspace{0.15in} \forall n\in \mathbb{N}\hspace{0.15in}  e^{-t\alpha_{n}}\leq C\hspace{0.05in} d_{n}\left(U_{q_{{2k+1}}}, U_{q_{{2k}}}\right).
\end{split}
\end{equation*}
But, this is contradictory to the inequality 4.3. Therefore, $\delta\left( E\right)=\delta\left(\Lambda_{1}\left( \varepsilon\right) \right)$ holds when $\Delta\left( E\right)= \Delta\left(\Lambda_{1}\left( \varepsilon\right) \right)$ and the condition $\mathbb{A}$ holds.
\end{proof}
There could be other diameter conditions as above which yields the same conclusion in Proposition 4.3. For example, by introducing
~\\
\textbf{Condition} $\boldsymbol{\mathbb{B}}$: $\hspace{0.2in}\forall$ $p$ $\hspace{0.1in}\forall$ $q_{1}$, $q_{2}$,..., $q_{p}$, $\hspace{0.1in}\exists$ $1\leq s\leq p$, $\hspace{0.1in}\exists C>0$ $\displaystyle \hspace{0.15in}\max_{1\leq i \leq q} d_{n}\left(U_{q_{i}}, U_{i}\right)\leq C d_{n}\left(U_{q_{s}}, U_{s}\right)$ $\hspace{0.3in}\forall n\in \mathbb{N}$.
~\\ we have

\begin{prop}\label{p31} Let $E$ be a nuclear Fr\'echet space with the properties \underline{DN} and $\Omega$ and $\varepsilon$ be associated exponent sequence of $E$. If $E$ satisfies the condition $\mathbb{B}$ and $\Delta\left(E\right)=\Delta\left(\Lambda_{1}\left(\varepsilon\right)\right)$, then $\delta\left(E\right)=\delta\left(\Lambda_{1}\left(\varepsilon\right)\right)$.
\end{prop}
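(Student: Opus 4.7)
The plan is to adapt the strategy of Proposition \ref{p3}, replacing the recursive invocation of Condition $\mathbb{A}$ by a pigeonhole argument driven by Condition $\mathbb{B}$. Assume for contradiction that $\delta(E)\neq\delta(\Lambda_1(\varepsilon))$. As in the proof of Proposition \ref{p3}, Corollary~1.10 of \cite{A} produces $M>0$ such that for every $p\in\mathbb{N}$ there exist $q_p>p$ and an infinite $I_p\subseteq\mathbb{N}$ with $d_n(U_{q_p},U_p)<e^{-M\varepsilon_n}$ for all $n\in I_p$. Feeding the list $q_1,\dots,q_p$ into Condition $\mathbb{B}$ at level $p$ returns an index $s_p\in\{1,\dots,p\}$ and a constant $C_p>0$ with
$$d_n(U_{q_i},U_i)\le C_p\,d_n(U_{q_{s_p}},U_{s_p})\qquad\forall\,i\le p,\ \forall\,n\in\mathbb{N}.$$

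The sequence $(s_p)$ in $\mathbb{N}$ either attains some value $s^{*}$ infinitely often or tends to infinity, and I would treat these cases in parallel. In the first case, let $P\subseteq\mathbb{N}$ be the infinite set on which $s_p=s^{*}$ and introduce the Banach space $B=\{(x_n):\sup_n|x_n|\,d_n(U_{q_{s^{*}}},U_{s^{*}})<\infty\}$. For any $p$, choosing $p'\in P$ with $p'\ge p$ lets Condition $\mathbb{B}$ at level $p'$ transfer control to the pair $(p,q_p)$, yielding $\sup_n|x_n|\,d_n(U_{q_p},U_p)<\infty$ for every $x\in B$; the sup-form characterization of $\Delta(E)$ recalled from \cite{L} in Section~2 then places $B\subseteq\Delta(E)$. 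In the second case, extract a subsequence $(p_k)$ for which $r_k:=s_{p_k}$ is strictly increasing; applying Condition $\mathbb{B}$ at level $p_{k+1}$ with the index $i=r_k\le p_{k+1}$ produces the chain
$$d_n(U_{q_{r_k}},U_{r_k})\le C_{p_{k+1}}\,d_n(U_{q_{r_{k+1}}},U_{r_{k+1}}),$$
and nested Banach spaces $B_k$ built from the weights $d_n(U_{q_{r_k}},U_{r_k})$ (with prefactors absorbing $C_{p_{k+1}}$) satisfy $\bigcap_k B_k\subseteq\Delta(E)$, exactly as in Proposition \ref{p3}.

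In both cases the hypothesis $\Delta(E)=\Lambda_1(\varepsilon)$ turns the resulting inclusion into $\Lambda_1(\varepsilon)$ into a closed-graph map, to which Theorem~5 of \cite{T} applies to give continuity. The resulting estimate reads: for every $t>0$ there is $C>0$ with $e^{-t\varepsilon_n}\le C\,d_n(U_{q_{s^{*}}},U_{s^{*}})$ in the first case (respectively $e^{-t\varepsilon_n}\le C\,d_n(U_{q_{r_k}},U_{r_k})$ for some $k$ in the second) for all $n$. Restricting to $n\in I_{s^{*}}$ (respectively $I_{r_k}$) where the right-hand side is dominated by $C\,e^{-M\varepsilon_n}$, and choosing $t<M$, forces $e^{(M-t)\varepsilon_n}\le C$, contradicting $\varepsilon_n\to\infty$ on an infinite set.

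The main obstacle I anticipate is the dichotomy on $(s_p)$: Condition $\mathbb{B}$ does not preclude $s_p$ drifting to infinity, so the second, chain-based case is genuinely needed alongside the pigeonhole case. Verifying $B\subseteq\Delta(E)$ or $\bigcap_k B_k\subseteq\Delta(E)$ cleanly in each case rests on the flexibility in Condition $\mathbb{B}$ to pick the level $p'$ (or $k$) large enough that the dominating pair controls the pair of interest, then invoking the sup-form characterization from \cite{L}.
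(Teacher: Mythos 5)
Your argument is correct. The dichotomy on $(s_p)$ is exhaustive; in the first case the single Banach space $B$ does land in $\Delta(E)$, since for every $p$ you can invoke Condition $\mathbb{B}$ at some level $p'\in P$ with $p'\ge p$ to dominate $d_n(U_{q_p},U_p)$ by $C_{p'}\,d_n(U_{q_{s^*}},U_{s^*})$; and in the second case the chain inequality is legitimate because $r_k=s_{p_k}\le p_k<p_{k+1}$, so the $B_k$ are nested and, as $r_k\to\infty$, the inclusion $\bigcap_k B_k\subseteq\Delta(E)$ follows just as in Proposition \ref{p3} (using $U_{r_k}\subseteq U_p$ for $r_k\ge p$). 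The closed-graph step and the contradiction on $I_{s^*}$, respectively $I_{r_k}$, with $t<M$ then go through. The paper's own proof is only a one-line indication, but it follows the same skeleton with a different choice of weights: it sets $D_p=\bigl\{x:\sup_n|x_n|\max_{1\le i\le p}d_n(U_{q_i},U_i)<\infty\bigr\}$. Since the max over a growing list is automatically monotone in $p$ and dominates each individual $d_n(U_{q_p},U_p)$, the $D_p$ are nested and $\bigcap_p D_p\subseteq\Delta(E)$ without any appeal to Condition $\mathbb{B}$; the condition is then used exactly once, at the end, to replace $\max_{1\le i\le p}d_n(U_{q_i},U_i)$ in the continuity estimate $e^{-t\varepsilon_n}\le C\max_{1\le i\le p}d_n(U_{q_i},U_i)$ by $C'\,d_n(U_{q_s},U_s)$ for a single $s\le p$, whose bad set $I_s$ supplies the contradiction. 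That route eliminates your case analysis entirely; your version is a valid alternative, with the pigeonhole/dichotomy being the price of deploying Condition $\mathbb{B}$ up front rather than after the continuity estimate.
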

The proof is similar to Proposition 4.3 except that the projective limit will be replaced by $\displaystyle \bigcap_{k}D_{k}$ where $\displaystyle D_{k}=\left\{ x=\left(x_{n} \right): \sup_{n\in \mathbb{N}} \vert x_{n}\vert \max _{1\leq i\leq p}d_{n}\left( U_{q_{i}}, U_{i}\right)<+\infty  \right\rbrace$.
~\\~\\
For some Fr\'echet spaces, one can obtain the diametral dimension by using a single bounded subset:
~\\ Terzio\u{g}lu \cite{T2} introduced an absolutely convex bounded subset $B$ of a Fr\'echet space E as  \textit{prominent bounded set} in case $\displaystyle \lim_{n\rightarrow +\infty} x_{n}d_{n}\left(B, U_{p}\right)=0$ for every $p$ implies $\left(x_{n}\right)\in \Delta\left(E\right)$. If $E$ has a prominent bounded set $B$, then $$\Delta\left(E\right)= \left\{\left(x_{n}\right)_{n\in \mathbb{N}}: \forall p, \lim_{n\rightarrow+\infty} x_{n}d_{n}\left(B,U_{p}\right)=0 \right\}.$$
In this case one can introduce a natural Fr\'echet space topology on $\Delta\left(E\right)$. Terzio\u{g}lu also gave a necessary and sufficient condition for a bounded subset to be prominent [Proposition 3, \cite{T6}], namely, $B$ is a prominent set if and only if for each p there is a q and $C>0$ such that
$$d_{n}\left(U_{q}, U_{p}\right)\leq Cd_{n}\left(B, U_{q}\right)$$
hold for all $n\in \mathbb{N}$.
~\\In the following proposition, we prove that having a prominent bounded subset is closely related to Bessaga's basis free version of Dragilev condition $d_{2}$, given in \cite{B}:
$$D_{2}: \hspace{.75in}\forall p\hspace{0.15in} \exists q\geq p+1 \hspace{0.15in} \forall k\geq q+1 \hspace{0.35in} \lim_{n\rightarrow \infty} {d_{n}\left( U_{q}, U_{p}\right) \over d_{n}\left( U_{k}, U_{q}\right) }=0.\hspace{1.35in}$$
\begin{prop}\label{p4} Let $E$ be a nuclear Fr\'echet space. The following are equivalent:
\begin{enumerate}
	\item $E$ has a prominent bounded set B.
	\item $E$ has the property $D_{2}$.
	\item For all p there exists $q>p$ such that $\displaystyle\sup_{l\geq q}\limsup_{n\in \mathbb{N}} {\varepsilon_{n}\left(q,l \right)\over \varepsilon_{n}\left(p,q \right) }\leq 1 $ holds.  
\end{enumerate}
\end{prop}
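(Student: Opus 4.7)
I propose to prove the three equivalences by establishing the cyclic chain $(1)\Rightarrow (3)\Rightarrow (2)\Rightarrow (1)$. The main tool throughout is Terzio\u{g}lu's characterization of prominence recalled just above the statement: $B$ is prominent if and only if, for every $p$, there exist $q>p$ and $C>0$ with $d_{n}(U_{q},U_{p})\leq C\,d_{n}(B,U_{q})$ for all $n\in\mathbb{N}$.

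For $(1)\Rightarrow (3)$, the idea is to apply Terzio\u{g}lu's criterion to a prominent bounded $B$: for every $p$ there is $q>p$ and $C>0$ with $d_{n}(U_{q},U_{p})\leq C\,d_{n}(B,U_{q})$. Since $B$ is bounded, for each $l\geq q$ there is $s_{l}>0$ with $B\subseteq s_{l}U_{l}$, hence $d_{n}(B,U_{q})\leq s_{l}d_{n}(U_{l},U_{q})$. Chaining and passing to negative logarithms yields $\varepsilon_{n}(q,l)\leq \varepsilon_{n}(p,q)+\log(Cs_{l})$ for every $n$. Dividing by $\varepsilon_{n}(p,q)$, which tends to $+\infty$ by nuclearity of $E$, gives $\limsup_{n}\varepsilon_{n}(q,l)/\varepsilon_{n}(p,q)\leq 1$ for every $l\geq q$, so the supremum over $l$ is also at most $1$, proving (3).

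For $(3)\Rightarrow (2)$, I would iterate (3): applied to $p$ it produces $q>p$, and applied to $q$ it produces $q^{\star}>q$, each satisfying the corresponding limsup bound. Combining these with the submultiplicativity of Kolmogorov diameters $d_{2n}(U_{q^{\star}},U_{p})\leq d_{n}(U_{q},U_{p})\,d_{n}(U_{q^{\star}},U_{q})$ --- equivalently $\varepsilon_{2n}(p,q^{\star})\geq \varepsilon_{n}(p,q)+\varepsilon_{n}(q,q^{\star})$ --- then yields the divergence $\varepsilon_{n}(p,q^{\star})-\varepsilon_{n}(q^{\star},k)\to +\infty$ for every $k>q^{\star}$, which is $D_{2}$ with witness $q^{\star}$. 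For $(2)\Rightarrow (1)$, I would build a prominent bounded $B$ directly from $D_{2}$-witnesses: set $p_{1}=1$ and inductively let $q_{j}$ be a $D_{2}$-witness for $p_{j}$ with $p_{j+1}=q_{j}+1$; then put $B=\bigcap_{j}c_{j}U_{q_{j}}$, where the scalars $c_{j}>0$ are chosen by a diagonal procedure so that $B$ is bounded (which is automatic since $q_{j}\to\infty$) and satisfies the lower estimate $d_{n}(B,U_{q_{j}})\geq C_{j}^{-1}d_{n}(U_{q_{j}},U_{p_{j}})$ needed for Terzio\u{g}lu's criterion.

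The main obstacle is the implication $(3)\Rightarrow (2)$: condition (3) only supplies the non-strict bound $\limsup\leq 1$, while $D_{2}$ requires the difference $\varepsilon_{n}(p,q)-\varepsilon_{n}(q,k)$ to tend to $+\infty$. This gap has to be closed by iterating (3) and exploiting the submultiplicative product inequality for Kolmogorov diameters, which converts a small additive slack at one level into a divergent one after passage to the iterated witness $q^{\star}$. The construction in $(2)\Rightarrow (1)$ is also delicate, since the scalars $c_{j}$ must be chosen to preserve boundedness of $B$ while simultaneously producing the required lower bound on $d_{n}(B,U_{q_{j}})$.
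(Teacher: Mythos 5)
Your implication $(1)\Rightarrow(3)$ is correct: chaining Terzio\u{g}lu's criterion $d_{n}(U_{q},U_{p})\leq C\,d_{n}(B,U_{q})$ with the boundedness estimate $d_{n}(B,U_{q})\leq s_{l}\,d_{n}(U_{l},U_{q})$ and dividing by $\varepsilon_{n}(p,q)\to\infty$ is essentially the composite of the paper's steps $1\Rightarrow 2$ and $2\Rightarrow 3$. The implication $(3)\Rightarrow(2)$, however, has a genuine gap, and it is exactly the one you flag. The additivity of Kolmogorov diameters reads $d_{2n}(U_{q^{\star}},U_{p})\leq d_{n}(U_{q^{\star}},U_{q})\,d_{n}(U_{q},U_{p})$, i.e.\ $\varepsilon_{2n}(p,q^{\star})\geq\varepsilon_{n}(p,q)+\varepsilon_{n}(q,q^{\star})$: the left side lives at index $2n$, the right side at index $n$. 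Condition (3) at level $q$ only gives $\varepsilon_{2n}(q^{\star},k)\leq(1+\delta)\,\varepsilon_{2n}(q,q^{\star})$ for large $n$, and $\varepsilon_{2n}(q,q^{\star})$ may be vastly larger than $\varepsilon_{n}(q,q^{\star})$; the resulting lower bound $\varepsilon_{2n}(p,q^{\star})-\varepsilon_{2n}(q^{\star},k)\geq\varepsilon_{n}(p,q)+\varepsilon_{n}(q,q^{\star})-(1+\delta)\varepsilon_{2n}(q,q^{\star})$ need not tend to $+\infty$ and can tend to $-\infty$. More basically, $\limsup_{n}\varepsilon_{n}(q,k)/\varepsilon_{n}(p,q)\leq 1$ only yields $\varepsilon_{n}(q,k)-\varepsilon_{n}(p,q)\leq\delta\,\varepsilon_{n}(p,q)$, an unbounded additive slack, whereas $D_{2}$ demands that this difference go to $-\infty$. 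The paper closes this gap with a different tool, namely Lemma \ref{l} (for a nuclear space, for all $p$ and $q>p$ there is $s>q$ with $d_{n}(U_{s},U_{p})/d_{n}(U_{q},U_{p})\to 0$, a consequence of Proposition 1.2 of Demeulenaere--Frerick--Wengenroth): one first obtains the bounded-ratio reformulation of $D_{2}$ with witness $q$ and then replaces $q$ by the larger $s$ to convert boundedness of the ratio into its vanishing. Your argument uses nuclearity only through $\varepsilon_{n}(p,q)\to\infty$, which is a warning sign, since the vanishing required by $D_{2}$ genuinely needs this extra input.

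The implication $(2)\Rightarrow(1)$ is also not established. Your set $B=\bigcap_{j}c_{j}U_{q_{j}}$ has the right shape and its boundedness is indeed automatic, but the whole content of the implication is the lower estimate $d_{n}(B,U_{q_{j}})\geq C_{j}^{-1}d_{n}(U_{q_{j}},U_{p_{j}})$, which you assert can be arranged ``by a diagonal procedure'' without indicating how the scalars $c_{j}$ produce it; lower bounds for diameters of intersections of neighborhoods do not come from soft arguments. This is precisely the statement the paper does not reprove but imports as Proposition 5 of Djakov--Terzio\u{g}lu. As written, only the leg $(1)\Rightarrow(3)$ of your cycle is complete.
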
 
We need the following lemma for the proof of Proposition \ref{p4}. As usual, we assume that all semi-norms are Hilbertian.
\begin{lemma}\label{l} Let $E$ be a nuclear Fr\'echet space. Then for all $p, q>p$, there is a $s>q$ such that
$$ \lim_{n\rightarrow +\infty} {d_{n}\left(U_{s}, U_{p} \right)\over  d_{n}\left(U_{q}, U_{p} \right)}=0.$$
\end{lemma}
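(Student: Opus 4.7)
The plan is to reduce the claim to a uniform decay of spectral tails, using the Hilbertian structure of the seminorms. First I would use the nuclearity of $E$ (applied to the index $q$) to choose $s>q$ such that the inclusion $j_{sq}\colon E_{s}\to E_{q}$ is compact, which forces $j_{sq}(U_{s})$ to be relatively compact in $E_{q}$. Second, I would perform the singular value decomposition of the compact operator $j_{qp}\colon E_{q}\to E_{p}$: there is an orthonormal basis $\{e_{n}\}_{n\geq 1}$ of $E_{q}$ consisting of eigenvectors of $j_{qp}^{*}j_{qp}$, with corresponding decreasing sequence of eigenvalues $\sigma_{n}^{2}$ satisfying $\sigma_{n+1}=d_{n}(U_{q},U_{p})$ for $n\geq 0$.

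Let $P_{n}$ denote the orthogonal projection in $E_{q}$ onto $\mathrm{span}(e_{1},\ldots,e_{n})$. Since $P_{n}\to I$ strongly on $E_{q}$ and strong operator convergence is uniform on relatively compact subsets, the spectral tails
$$\varepsilon_{n}:=\sup_{x\in U_{s}}\bigl\|(I-P_{n})\,j_{sq}(x)\bigr\|_{q}$$
tend to $0$. For every $x\in U_{s}$ I would split
$$j_{sp}(x)=j_{qp}\bigl(P_{n}j_{sq}(x)\bigr)+j_{qp}\bigl((I-P_{n})j_{sq}(x)\bigr);$$
the first summand lies in the $n$-dimensional subspace $j_{qp}(\mathrm{span}(e_{1},\ldots,e_{n}))\subseteq E_{p}$, while the second, being the $j_{qp}$-image of a vector orthogonal to $\mathrm{span}(e_{1},\ldots,e_{n})$, obeys the basic SVD estimate
$$\bigl\|j_{qp}\bigl((I-P_{n})j_{sq}(x)\bigr)\bigr\|_{p}\leq \sigma_{n+1}\bigl\|(I-P_{n})j_{sq}(x)\bigr\|_{q}\leq \sigma_{n+1}\varepsilon_{n}.$$
Taking the supremum over $x\in U_{s}$ yields $d_{n}(U_{s},U_{p})\leq \varepsilon_{n}\,\sigma_{n+1}=\varepsilon_{n}\,d_{n}(U_{q},U_{p})$, which gives the desired conclusion.

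The only nontrivial point is the uniform convergence $\varepsilon_{n}\to 0$: pointwise convergence $\|(I-P_{n})y\|_{q}\to 0$ is automatic from the fact that $\{e_{n}\}$ is an orthonormal basis of $E_{q}$, but its upgrade to uniformity over $j_{sq}(U_{s})$ uses precisely that this set is relatively compact, which is the content of the compactness of $j_{sq}$ furnished by nuclearity. This is the single step that genuinely uses the nuclear Fr\'echet assumption rather than mere Fr\'echet--Schwartz, and it is the place where the choice of $s$ (as opposed to any $s>q$) matters.
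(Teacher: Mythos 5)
Your argument is correct, but it is a genuinely different route from the paper's: the paper disposes of this lemma in one line by citing Proposition 1.2 of Demeulenaere--Frerick--Wengenroth \cite{L}, whereas you give a self-contained proof from the singular value decomposition. Your two ingredients are both sound: $d_{n}(U_{q},U_{p})=\sigma_{n+1}$ is the standard identification of Kolmogorov diameters of Hilbert balls with singular values of the link map (legitimate here because the paper explicitly assumes Hilbertian seminorms right before the lemma), and the splitting $j_{sp}(x)=j_{qp}(P_{n}j_{sq}(x))+j_{qp}((I-P_{n})j_{sq}(x))$ places $U_{s}$ inside an at most $n$-dimensional subspace plus $\varepsilon_{n}\sigma_{n+1}U_{p}$, giving $d_{n}(U_{s},U_{p})\leq\varepsilon_{n}\,d_{n}(U_{q},U_{p})$ with $\varepsilon_{n}\to 0$ by equicontinuity of $I-P_{n}$ on the relatively compact set $j_{sq}(U_{s})$. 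What your approach buys is transparency: it isolates exactly what is used, namely Hilbertian seminorms plus one compact link map. What the citation buys is generality -- the result in \cite{L} does not need to route through an explicit SVD. Two small remarks: your closing comment has the logic inverted, since the compactness of some $j_{sq}$ is precisely the Fr\'echet--Schwartz property, so your proof in fact works for Hilbertizable Fr\'echet--Schwartz spaces and nuclearity is only used as a sufficient condition for it; and the eigenvectors of $j_{qp}^{*}j_{qp}$ with nonzero eigenvalue only span $(\ker j_{qp})^{\perp}$, so one should extend by an orthonormal basis of the kernel (assigned singular value $0$), which changes nothing in the estimate.
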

\begin{proof} This is an immediate consequence of Proposition 1.2 in \cite{L}.
\end{proof}
It is worth noting that, by using Lemma 4.6, the condition $D_{2}$ can also be stated as follows:
$$D_{2}: \hspace{.75in}\forall p\hspace{0.15in} \exists q\geq p+1 \hspace{0.15in} \forall k\geq q+1 \hspace{0.35in} \sup_{n\in \mathbb{N}} {d_{n}\left( U_{q}, U_{p}\right) \over d_{n}\left( U_{k}, U_{q}\right) }<+\infty.\hspace{1.35in}$$
We are now ready to give the proof of Proposition \ref{p4}.
\begin{proof**} $1\Rightarrow 2:$ This follows immediately from Lemma 4.6 and the definition of $D_{2}$. 
~\\ $2\Rightarrow 1:$ Follows from Proposition 5 of \cite{TD}.
~\\ $2\Leftrightarrow 3:$ Suppose $E$ has the condition $D_{2}$. Then, for all $p$, there exists a $q>p$ such that for all $k>q$  
\begin{equation*}
\begin{split}
\sup_{n\in \mathbb{N}} {d_{n}\left( U_{q}, U_{p}\right) \over d_{n}\left( U_{k}, U_{q}\right) }<\infty \hspace{0.15in} & \Leftrightarrow \hspace{0.15in} \exists M>0\hspace{0.15in} \forall n\in \mathbb{N} \hspace{0.25in}{d_{n}\left( U_{q}, U_{p}\right) \over d_{n}\left( U_{k}, U_{q}\right) }\leq M\hspace{0.35in} \\
& \Leftrightarrow \hspace{0.15in} \exists M>0\hspace{0.15in} \forall n\in \mathbb{N} \hspace{0.25in}\varepsilon_{n}\left(p,q\right)\geq -\ln M + \varepsilon_{n}\left(q,k\right) \\
& \Leftrightarrow \hspace{0.15in} \limsup_{n\in \mathbb{N}}{\varepsilon_{n}\left(q,k\right)\over \varepsilon_{n}\left(p,q\right)}\leq 1.
\end{split}
\end{equation*}
Hence we obtain for all $p$, there exists a $q>p$ such that
 $$\displaystyle\sup_{l\geq q}\limsup_{n\in \mathbb{N}} {\varepsilon_{n}\left(q,l \right)\over \varepsilon_{n}\left(p,q \right) }\leq 1.$$
\end{proof**}
As an easy consequence of Proposition \ref{p4} and (3.1)(\cite[Cor.\ 1.10]{A}) above, we obtain the following result which gives a relation between having prominent bounded subset and its approximate diametral dimension of a nuclear Fr\'echet space with the properties \underline{DN} and $\Omega$:
\begin{cor}\label{c1} Let $E$ be a nuclear Fr\'echet space with the properties \underline{DN} and $\Omega$ and $\varepsilon$ the associated exponent sequence. $\delta\left(E\right)=\delta\left(\Lambda_{1}\left(\varepsilon\right)\right)$ implies that $E$ has a prominent bounded subset.
\end{cor}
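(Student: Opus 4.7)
The plan is to translate the hypothesis via equation (3.1) into condition (3) of Proposition~\ref{p4}, and then invoke that proposition to conclude that $E$ carries a prominent bounded subset. For bookkeeping I abbreviate $\mathbf{a}_1(p,q):=\limsup_n\varepsilon_n(p,q)/\varepsilon_n$, $\mathbf{a}_2(p,q):=\liminf_n\varepsilon_n(p,q)/\varepsilon_n$, and $A(p):=\sup_{q\geq p}\mathbf{a}_1(p,q)$; the hypothesis $\delta(E)=\delta(\Lambda_1(\varepsilon))$ then rewrites through (3.1) as $\inf_p A(p)=0$.

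My first step is to record two monotonicities. From $U_{p'}\subseteq U_p$ whenever $p'\geq p$ one gets $d_n(U_q,U_p)\leq d_n(U_q,U_{p'})$, hence $\mathbf{a}_1(p,q)\geq \mathbf{a}_1(p',q)$, so $A$ is non-increasing. Combined with $\inf_p A(p)=0$, this upgrades to $A(p)\to 0$ as $p\to\infty$. Symmetrically, $U_q\subseteq U_{q_0}$ for $q\geq q_0$ gives $\varepsilon_n(p,q)\geq \varepsilon_n(p,q_0)$, so $\mathbf{a}_2(p,q)$ is non-decreasing in $q$; by Proposition~\ref{p1} the baseline $\alpha:=\mathbf{a}_2(p,p+1)$ is strictly positive, whence $\mathbf{a}_2(p,q)\geq\alpha$ for every $q\geq p+1$.

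Given an arbitrary $p$, I would then use $A(q)\to 0$ to select $q^\ast>p$ with $A(q^\ast)\leq\alpha$. For every $l\geq q^\ast$ the elementary inequality $\limsup_n(a_n/b_n)\leq(\limsup_n a_n)/(\liminf_n b_n)$, legitimate since the denominator $\varepsilon_n(p,q^\ast)/\varepsilon_n$ has $\liminf\geq\alpha>0$, yields
$$\limsup_{n\to\infty}\frac{\varepsilon_n(q^\ast,l)}{\varepsilon_n(p,q^\ast)}\leq\frac{\mathbf{a}_1(q^\ast,l)}{\mathbf{a}_2(p,q^\ast)}\leq\frac{A(q^\ast)}{\alpha}\leq 1,$$
with a bound independent of $l$. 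Taking the supremum over $l\geq q^\ast$ delivers condition (3) of Proposition~\ref{p4} at the given $p$, and invoking the equivalences $(3)\Leftrightarrow(2)\Leftrightarrow(1)$ from that proposition finishes the argument.

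The one delicate point I foresee is the interplay between the two monotonicities: equation (3.1) \emph{a priori} only supplies individual indices $p$ with $A(p)$ small, so the non-increasing nature of $A$ is essential to upgrade this into a genuine limit $A(p)\to 0$, which in turn is what lets $q^\ast$ be chosen \emph{above} any prescribed $p$. Beyond that coordination, the argument is just a quantitative comparison anchored by the positive constant $\alpha$ that Proposition~\ref{p1} furnishes.
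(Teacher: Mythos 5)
Your argument is correct and follows the route the paper intends: the paper states this corollary as an ``easy consequence'' of (3.1) and Proposition~\ref{p4} without supplying details, and your proof fills in exactly the missing quantitative step, anchoring the comparison $\limsup_n \varepsilon_n(q^\ast,l)/\varepsilon_n(p,q^\ast)\leq A(q^\ast)/\mathbf{a}_2(p,q^\ast)\leq 1$ on the strictly positive lower bound $\liminf_n \varepsilon_n(p,q)/\varepsilon_n>0$ recorded after Proposition~\ref{p1} and on the monotonicity of $A$. No gaps.
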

The following theorem is the main result of this section which says that Question \ref{q1} holds true provided $E$ has a prominent bounded subset:
\begin{thm}\label{t5} Let $E$ be a nuclear Fr\'echet space with the properties \underline{DN} and $\Omega$ and $\varepsilon$ the associated exponent sequence.  $\delta\left(E\right)=\delta\left(\Lambda_{1}\left(\varepsilon\right)\right)$ if and only if $E$ has a prominent bounded set  and $\Delta\left(E\right)=\Lambda_{1}\left(\varepsilon\right)$.
\end{thm}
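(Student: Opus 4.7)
The forward direction is immediate: if $\delta(E) = \delta(\Lambda_{1}(\varepsilon))$, then Proposition \ref{p2} yields $\Delta(E) = \Delta(\Lambda_{1}(\varepsilon)) = \Lambda_{1}(\varepsilon)$, while Corollary \ref{c1} supplies the prominent bounded subset. The content of the theorem lies in the converse, which is what I will focus on.

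For the converse, assume $E$ has a prominent bounded set $B$ and $\Delta(E) = \Lambda_{1}(\varepsilon)$ as sets. The prominence of $B$ endows $\Delta(E)$ with a natural Fr\'echet topology generated by the seminorms $\|x\|_{p} = \sup_{n} |x_{n}|\, d_{n}(B, U_{p})$. Since convergence in either $\Delta(E)$ or $\Lambda_{1}(\varepsilon)$ implies coordinate-wise convergence, the identity map $\Delta(E) \to \Lambda_{1}(\varepsilon)$ has closed graph, hence by Theorem 5 of \cite{T} it is a topological isomorphism of Fr\'echet spaces. Testing the continuity of $\Delta(E) \hookrightarrow \Lambda_{1}(\varepsilon)$ on the unit sequences yields the key estimate: for every $k \in \mathbb{N}$ there exist $p_{k} \in \mathbb{N}$ and $C_{k} > 0$ such that $e^{-\varepsilon_{n}/k} \leq C_{k}\, d_{n}(B, U_{p_{k}})$ for all $n$.

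The next step upgrades this lower bound on $d_{n}(B, U_{p_{k}})$ to a lower bound on $d_{n}(U_{q}, U_{p_{k}})$ for every $q \geq p_{k}$. Since $B$ is bounded, for any such $q$ there is $r_{q} > 0$ with $B \subseteq r_{q} U_{q}$, and a direct computation with the infimum defining the Kolmogorov diameter gives $d_{n}(B, U_{p_{k}}) \leq r_{q}\, d_{n}(U_{q}, U_{p_{k}})$. Chaining the two inequalities produces $d_{n}(U_{q}, U_{p_{k}}) \geq (r_{q} C_{k})^{-1} e^{-\varepsilon_{n}/k}$, so $\varepsilon_{n}(p_{k}, q) \leq \varepsilon_{n}/k + \log(r_{q} C_{k})$ and consequently $\limsup_{n} \varepsilon_{n}(p_{k}, q)/\varepsilon_{n} \leq 1/k$. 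Taking the supremum over $q \geq p_{k}$ and letting $k \to \infty$ produces
$$\inf_{p}\,\sup_{q\geq p}\,\limsup_{n\in\mathbb{N}} \frac{\varepsilon_{n}(p,q)}{\varepsilon_{n}} = 0,$$
which by Aytuna's characterization (3.1) (Corollary 1.10 of \cite{A}) is precisely $\delta(E) = \delta(\Lambda_{1}(\varepsilon))$.

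I expect the most delicate step to be the passage from the set-theoretic identity $\Delta(E) = \Lambda_{1}(\varepsilon)$ to a topological isomorphism of Fr\'echet spaces via the closed graph theorem, though it runs in parallel with the closed-graph arguments already deployed in Theorem \ref{t4} and Proposition \ref{p3}. Once that isomorphism is in hand, the prominent bounded set $B$ takes over the role played there by the barrelledness of $\Delta(E)$ or by condition $\mathbb{A}$: it simultaneously supplies the Fr\'echet structure on $\Delta(E)$ needed to invoke the closed graph theorem and, via boundedness, the bridge from the lower bound on $d_{n}(B, U_{p_{k}})$ to the lower bound on $d_{n}(U_{q}, U_{p_{k}})$ demanded by Aytuna's criterion.
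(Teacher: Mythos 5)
Your proof is correct, but your converse direction takes a genuinely different route from the paper's. The paper deduces from the existence of a prominent bounded set (via Proposition \ref{p4}) that $E$ satisfies $D_{2}$, checks that $D_{2}$ forces condition (wQ), concludes via Theorem \ref{t1} that $\Delta(E)$ is barrelled in its canonical PLB-topology, and then simply invokes Theorem \ref{t4}. You instead work with the Fr\'echet topology that the prominent set $B$ induces directly on $\Delta(E)$, apply the classical closed graph theorem between the two Fr\'echet spaces $\Delta(E)$ and $\Lambda_{1}(\varepsilon)$, test continuity on the unit vectors to obtain $e^{-\varepsilon_{n}/k}\leq C_{k}\,d_{n}(B,U_{p_{k}})$, and then use boundedness of $B$ (namely $B\subseteq r_{q}U_{q}$, hence $d_{n}(B,U_{p_{k}})\leq r_{q}\,d_{n}(U_{q},U_{p_{k}})$) to land on Aytuna's criterion (\ref{e1}); all of these steps check out, including the membership of the unit vectors in $\Delta(E)$ and the uniformity in $q$ of the bound $\limsup_{n}\varepsilon_{n}(p_{k},q)/\varepsilon_{n}\leq 1/k$, since the $q$-dependent constant is absorbed by $\varepsilon_{n}\to\infty$. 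What your route buys: it bypasses the PLB-space machinery of \cite{ABB} and the barrelled closed-graph theorem of \cite{T}, needing only the Fr\'echet-space closed graph theorem, and it is self-contained modulo (\ref{e1}). What the paper's route buys: it recycles Theorem \ref{t4} wholesale and makes explicit the chain from prominence through $D_{2}$ and (wQ) to barrelledness, which is of independent interest. The forward direction is identical in both treatments (Proposition \ref{p2} together with Corollary \ref{c1}). One cosmetic remark: you do not need the identity map to be a topological isomorphism, only the continuity of $\Delta(E)\hookrightarrow\Lambda_{1}(\varepsilon)$.
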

\begin{proof} Let $E$ be a nuclear Fr\'echet space with a prominent bounded subset $B$. Then $E$ satisfies condition $D_{2}$
$$ \forall p\hspace{0.15in} \exists q\geq p+1 \hspace{0.15in} \forall k\geq q+1\hspace{0.15in}\exists C>0 \hspace{0.45in} \lim_{n\rightarrow \infty} {d_{n}\left( U_{q}, U_{p}\right) \over d_{n}\left( U_{k}, U_{q}\right) }=0$$
and, in particular, if we take $N=p$, $M=n=q$ and $m=k$, we get 
$$\displaystyle \forall N\hspace{0.075in}\exists\hspace{0.075in}M, n\hspace{0.075in}\forall m, \hspace{0.075in}\exists\hspace{0.075in} S>0:\hspace{0.075in} d_{n}\left(U_{n}, U_{N}\right)\leq S d_{n}\left(U_{m}, U_{M}\right)\hspace{0.25in} \forall n\in \mathbb{N}.$$ 
which means that $E$ satisfy the condition $(wQ)$ given in Theorem \ref{t1} and so $\Delta\left(E\right)$ is barrelled with respect to the canonical topology. Hence the result follows from Theorem \ref{t4}.
\end{proof}
In the final part of this section we examine the conditions for which the converse of Corollary \ref{c1} also holds.
~\\ For this, we define
$$\tdot\left(E\right):=\left\{\left(t_{n}\right)_{n\in \mathbb{N}}: \hspace{0.1in} \forall p, \hspace{0.1in}\forall 0< \varepsilon<1, \hspace{0.1in} \exists q>p \hspace{0.1in}\lim_{n\rightarrow +\infty} t_{n}d_{n}\left(U_{q},U_{p}\right)^{\varepsilon}=0\right\}.$$
The next result provides a condition that implies $\delta\left(E\right)=\delta\left(\Lambda_{1}\left(\varepsilon\right)\right)$ when $E$ has a prominent subset.  
\begin{prop}\label{p5} Let $E$ be a nuclear Fr\'echet space with the properties $\underline{DN}$ and $\Omega$, its associated exponent sequence $\varepsilon=\left(\varepsilon_{n}\right)_{n\in \mathbb{N}}$. If $E$ has a prominent bounded subset $B$ and $\Delta\left(E\right)= \tdot\left(E\right)$, then $\delta\left(E\right)= \delta\left(\Lambda_{1}\left(\varepsilon\right)\right)$. 
\end{prop}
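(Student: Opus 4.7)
The strategy is to show $\Delta(E)=\Lambda_{1}(\varepsilon)$ and then invoke Theorem \ref{t5}: since $E$ has a prominent bounded set, that theorem gives the equivalence $\delta(E)=\delta(\Lambda_{1}(\varepsilon))\Leftrightarrow\Delta(E)=\Lambda_{1}(\varepsilon)$, so it is enough to establish the second equality. The inclusion $\Lambda_{1}(\varepsilon)\subseteq\Delta(E)$ is furnished by Proposition \ref{p1}, so the work is to prove that every $(x_{n})\in\Delta(E)$ satisfies $\limsup_{n}\log|x_{n}|/\varepsilon_{n}\leq 0$, i.e., lies in $\Lambda_{1}(\varepsilon)$.

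To this end, fix $(x_{n})\in\Delta(E)$ and $p_{0}\in\mathbb{N}$. Because $B$ is prominent, item (3) of Proposition \ref{p4} supplies a $q_{0}>p_{0}$ with
$$\sup_{l\geq q_{0}}\limsup_{n}\frac{\varepsilon_{n}(q_{0},l)}{\varepsilon_{n}(p_{0},q_{0})}\leq 1.$$
Define $A:=\limsup_{n}\varepsilon_{n}(p_{0},q_{0})/\varepsilon_{n}$; the upper bound $d_{n}(U_{q_{0}},U_{p_{0}})\geq C_{1}e^{-a_{1}\varepsilon_{n}}$ from Proposition \ref{p1} gives $A\leq a_{1}<\infty$. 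Now I would exploit $\Delta(E)=\tdot(E)$: for each $\eta\in(0,1)$, take $q_{\eta}>q_{0}$ with $\lim_{n}x_{n}d_{n}(U_{q_{\eta}},U_{q_{0}})^{\eta}=0$, equivalently $\log|x_{n}|-\eta\,\varepsilon_{n}(q_{0},q_{\eta})\to -\infty$. Dividing by $\varepsilon_{n}$ and chaining the bounds $\limsup_{n}\varepsilon_{n}(q_{0},q_{\eta})/\varepsilon_{n}(p_{0},q_{0})\leq 1$ (applicable since $q_{\eta}\geq q_{0}$) and $\limsup_{n}\varepsilon_{n}(p_{0},q_{0})/\varepsilon_{n}=A$ yields $\limsup_{n}\log|x_{n}|/\varepsilon_{n}\leq\eta A$.

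The crucial observation is that $p_{0}$, $q_{0}$, and hence $A$ are fixed before $\eta$ is chosen, so letting $\eta\to 0^{+}$ forces $\limsup_{n}\log|x_{n}|/\varepsilon_{n}\leq 0$; therefore $(x_{n})\in\Lambda_{1}(\varepsilon)$, giving $\Delta(E)\subseteq\Lambda_{1}(\varepsilon)$ and so $\Delta(E)=\Lambda_{1}(\varepsilon)$. Theorem \ref{t5} then delivers $\delta(E)=\delta(\Lambda_{1}(\varepsilon))$. I do not expect a substantive obstacle here; the only point demanding care is the quantifier discipline just described, since otherwise sending $\eta\to 0$ and $q_{\eta}\to\infty$ simultaneously could, a priori, let $A$ grow without control.
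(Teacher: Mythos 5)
Your proof is correct, but it takes a genuinely different route from the paper's. The paper equips $\Delta(E)$ with the Fr\'echet topology induced by the prominent set $B$, regards $\tdot(E)$ as the PLB-space $\bigcap_{(p,\varepsilon)}\bigcup_{q>p}B_{p,\varepsilon,q}$, and applies the Grothendieck factorization theorem to the identity $\Delta(E)\hookrightarrow\tdot(E)$; evaluating the resulting continuity estimate on unit vectors and combining it with the bounds $C_1e^{-a_1\varepsilon_n}\leq d_n(U_q,U_p)$ and $d_n(B,U_q)\leq C_2\,d_n(U_{q+1},U_q)$ then verifies Aytuna's criterion \eqref{e1} directly. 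You instead work entirely at the level of sequences: the combinatorial characterization of prominence in Proposition \ref{p4}(3) lets you fix $q_0$ (hence $A=\limsup_n\varepsilon_n(p_0,q_0)/\varepsilon_n\leq a_1$) once and for all, after which each exponent $\eta$ fed into the set-theoretic definition of $\tdot(E)$ yields $\limsup_n\log|x_n|/\varepsilon_n\leq\eta A$; since $A$ does not depend on $\eta$, letting $\eta\to0^{+}$ gives $\Delta(E)\subseteq\Lambda_1(\varepsilon)$, and Theorem \ref{t5} (whose relevant direction is proved earlier in the paper, so there is no circularity) finishes the argument. Your chaining $\limsup(a_nb_n)\leq\limsup a_n\cdot\limsup b_n$ is legitimate here because both factors are eventually nonnegative with finite limsups, and you are right that the only delicate point is quantifier order — the uniformity of Proposition \ref{p4}(3) over all $l\geq q_0$ is exactly what makes the bound independent of the $q_\eta$ handed to you by $\tdot(E)$. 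What your route buys is elementarity (no topology on $\Delta(E)$ or $\tdot(E)$, no factorization theorem at this stage) and an explicit proof of the intermediate identity $\Delta(E)=\Lambda_1(\varepsilon)$; what the paper's route buys is a direct verification of \eqref{e1} in the same functional-analytic style as the rest of Section 4.
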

\begin{proof} Let $B$ be a prominent bounded subset of $E$. Then, for all $p\in \mathbb{N}$, there exists a $q>p$ and a $C>0$ so that for every $n\in \mathbb{N}$
$$d_{n}\left(U_{q}, U_{p}\right)\leq C d_{n}\left(B, U_{q}\right)$$
holds. Also, since $B$ is bounded and $\varepsilon_{n}$ is the associated exponent sequence, then there exist $C_{1}, C_{2}, D_{1}, D_{2}>0$ and $a_{1}, a_{2}>0$ satisfying
$$D_{1} e^{{-a_{1}\varepsilon_{n}}}\leq C_{1}d_{n}\left(U_{q}, U_{p}\right)\leq d_{n}\left(B, U_{q}\right)\leq C_{2}d_{n}\left(U_{q+1}, U_{q}\right)\leq D_{2} e^{{-a_{2}\varepsilon_{n}}}$$
for every $n\in \mathbb{N}$. On the other hand, $\displaystyle \Delta\left(E\right)=\left\{\left(x_{n}\right)_{n\in \mathbb{N}}: \hspace{0.1in}\forall p \hspace{0.1in} \lim_{n\rightarrow +\infty} \left|x_{n}\right|d_{n}\left(B, U_{p}\right)=0\right\}$ is a Fr\'echet space since B is a prominent set. Fix $p$, $q>p$ and $\varepsilon$. Consider the Banach space
$$\displaystyle B_{p,\varepsilon, q}= \left\{t=\left(t_{n}\right)_{n\in \mathbb{N}}: \hspace{0.1in}\lim_{n\rightarrow +\infty} \left|t_{n}\right|d_{n}\left(U_{q},U_{p}\right)^{\varepsilon}=0\right\}.$$
Since $U_{q+1}\subseteq U_{q}$, we have $d_{n}\left(U_{q+1}, U_{p}\right)\leq d_{n}\left(U_{q}, U_{p}\right)$ for every $n\in \mathbb{N}$ and $B_{p, \varepsilon, q}\subseteq  B_{p,\varepsilon, q+1}$. Then we can put the topology on $\displaystyle \tdot\left(E\right)= \bigcap_{\left(p,\varepsilon\right)} \bigcup_{q>p} B_{p, \varepsilon,q}$ which of the projective limit of inductive limits of Banach spaces $B_{p,\varepsilon, q}$. In view of Grothendieck Factorization theorem (\cite{K}, p.225), for all $p$, $0< \varepsilon<1$ there exists a $q>p$ such that $\Delta\left(E\right) \hookrightarrow B_{p, \varepsilon,q}$ is continous 
$$\forall p, \hspace{0.15in} 0<\varepsilon< 1, \hspace{0.15in} \exists q>p, \hspace{0.25in} \hspace{0.05in}C>0\hspace{0.25in} d_{n}\left(U_{q}, U_{p}\right)^{\varepsilon}\leq d_{n}\left(B, U_{q}\right)\hspace{0.25in} \forall n\in \mathbb{N}.$$
Now take $\delta>0$. Then, for a given $p$, we choose $0<\varepsilon <1$ so that $\displaystyle 0<\varepsilon <{\delta \over a_{1}}$. Then there exists a $\overline{C}>0$ so that for all $n\in \mathbb{N}$,
\begin{equation*}
\begin{split}
\overline{C} e^{{-\varepsilon a_{1}\varepsilon_{n}} }\leq C d_{n}\left(B, U_{p}\right)^{\varepsilon}\leq d_{n}\left(U_{q}, U_{p}\right)^{\varepsilon} \hspace{0.2in} & \Leftrightarrow \hspace{0.2in} \overline{C}e^{-\delta\varepsilon_{n}}\leq  d_{n}\left(U_{q}, U_{p}\right)^{\varepsilon}  \leq C d_{n}\left(B, U_{q}\right) \\
&\Leftrightarrow \hspace{0.2in} \ln{\overline{C}} -\delta \varepsilon_{n} \leq \ln{C} +\ln{d_{n}\left(B,U_{q}\right)} \leq \ln{C}+ \ln{d_{n}\left(U_{l},U_{q}\right)} \\
& \Rightarrow \hspace{0.2in} -\ln{d_{n}\left(U_{l}, U_{q}\right)}\leq \left(\ln{C}- \ln{\overline{C}}\right)+\delta \varepsilon_{n}\\
& \Rightarrow \hspace{0.2in} \limsup_{n} {\varepsilon_{n}\left(q,l\right)\over \varepsilon_{n}}\leq \delta.
\end{split}
\end{equation*}
Hence, we obtain that for all $\delta>0$ there is a $q$ so that $$\displaystyle \sup_{l>q}\limsup_{n} {\varepsilon_{n}\left(q,l\right)\over \varepsilon_{n}}\leq \delta \hspace{0.35in}\textnormal{and} \hspace{0.35in} \displaystyle \inf_{q} \sup_{l>q}\limsup_{n} {\varepsilon_{n}\left(q,l\right)\over \varepsilon_{n}}=0,$$ which means that $\delta\left(E\right)=\delta\left(\Lambda_{1}\left(\varepsilon\right)\right)$.
\end{proof}
Note that $\tdot\left(E\right)$ is always an algebra under multiplication. If $\left(t_{n}\right)_{n\in \mathbb{N}}\in \tdot\left(E\right)$, then for all $p$, $0<\varepsilon<1$, we can choose $q>p$ so that
$$\displaystyle \lim_{n\rightarrow\infty}t_{n}d_{n}\left(U_{q},U_{p}\right)^{{{\varepsilon\over 2}}}=0,$$
which means $\left(t^{2}_{n}\right)\in \tdot\left(E\right)$. Then for any $\left(t_{n}\right)_{n\in \mathbb{N}}, \left(s_{n}\right)_{n\in \mathbb{N}}\in \tdot\left(E\right)$, we have that $\left(t_{n}s_{n}\right)_{n\in \mathbb{N}}\in \tdot\left(E\right)$ as $\displaystyle \left|t_{n}s_{n}\right|\leq {\left|t_{n}\right|^{2}\over 2}+{\left|s_{n}\right|^{2}\over 2}$ for all $n\in \mathbb{N}$.
~\\ But $\Delta (E)$ need not to be an algebra under multiplication. If it does, then $\Delta\left(E\right)$ satisfies the condition "$\left(t_{n}\right)\in \Delta\left(E\right)$ implies $\left(t^{2}_{n}\right)\in \Delta\left(E\right)$", vice versa. This condition gives that $\left(t^{2^{m}}_{n}\right)\in \Delta\left(E\right)$ for all $m\in \mathbb{N}$. Now, for a $p$ and $\varepsilon>0$, we can choose $m\in \mathbb{N}$ so large that $\displaystyle {1\over 2^{m}}\leq \varepsilon$ and find a q so that
$$\displaystyle \lim_{n\rightarrow \infty} t^{2^{m}}_{n}d_{n}\left(U_{q}, U_{p}\right)=0 \hspace{0.25in} \textnormal{and} \hspace{0.25in}\displaystyle \lim_{n\rightarrow \infty} t_{n}d_{n}\left(U_{q}, U_{p}\right)^{\varepsilon}=0$$
which gives that $\Delta\left(E\right)\subseteq \tdot\left(E\right)$. Since $\tdot\left(E\right) \subseteq \Delta\left(E\right)$ always holds, we have $\tdot\left(E\right) =\Delta\left(E\right)$. Hence we conclude that the followings are equivalent:
\begin{enumerate}
\item $\Delta\left(E\right)=\tdot\left(E\right)$
\item $\Delta\left(E\right)$ is an algebra under multiplication.
\item $\left(t_{n}\right)\in \Delta\left(E\right)$ implies $\left(t^{2}_{n}\right)\in \Delta\left(E\right)$.
\end{enumerate}
We end this paper with the following result which is a generalization of Proposition \ref{p5}:
\begin{cor} Let $E$ be a nuclaer Fr\'echet space with the properties \underline{DN} and $\Omega$, its associated exponent sequence  $\varepsilon= \left(\varepsilon_{n}\right)_{n\in \mathbb{N}}$. E has a prominent bounded subset and $\Delta\left(E\right)$ is an algebra if and only if $\delta\left(E\right)= \delta\left(\Lambda_{1}\left(\varepsilon\right)\right)$.
\end{cor}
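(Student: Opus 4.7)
The plan is to deduce this corollary directly from the results already established in the section, together with the three equivalences listed in the paragraph just above the statement, namely that $\Delta(E)$ being an algebra under pointwise multiplication is equivalent to $\Delta(E) = \tdot(E)$. No new technical machinery should be required.

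For the forward implication, I would assume that $E$ has a prominent bounded subset and that $\Delta(E)$ is an algebra under multiplication. By the equivalence recorded in the preceding paragraph, this second hypothesis is the same as $\Delta(E) = \tdot(E)$. Then Proposition \ref{p5} applies verbatim and yields $\delta(E) = \delta(\Lambda_1(\varepsilon))$.

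For the converse, I would start from $\delta(E) = \delta(\Lambda_1(\varepsilon))$. Corollary \ref{c1} immediately gives that $E$ has a prominent bounded subset, so the first conclusion is free. Theorem \ref{t5} then gives $\Delta(E) = \Lambda_1(\varepsilon)$, so it remains only to observe that $\Lambda_1(\varepsilon)$ is closed under coordinatewise multiplication: if $x, y \in \Lambda_1(\varepsilon)$, then for every $k \in \mathbb{N}$,
\[
\sup_{n\in\mathbb{N}} |x_n y_n| e^{-\varepsilon_n/k} \leq \Bigl(\sup_{n\in\mathbb{N}} |x_n| e^{-\varepsilon_n/(2k)}\Bigr)\Bigl(\sup_{n\in\mathbb{N}} |y_n| e^{-\varepsilon_n/(2k)}\Bigr) < +\infty,
\]
so $xy \in \Lambda_1(\varepsilon)$. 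Hence $\Delta(E)$ is an algebra, completing the proof.

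There is essentially no obstacle here: the whole content of the corollary is a bookkeeping assembly of Proposition \ref{p5}, Corollary \ref{c1}, Theorem \ref{t5}, and the three-way equivalence stated right before the corollary. The only small verification needed from scratch is the trivial multiplicativity of $\Lambda_1(\varepsilon)$, handled by the one-line estimate above.
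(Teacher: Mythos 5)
Your proof is correct, and it is exactly the assembly the paper intends (the paper states this corollary without proof, calling it a generalization of Proposition \ref{p5}): the forward direction is Proposition \ref{p5} via the equivalence $\Delta(E)=\tdot(E)\Leftrightarrow\Delta(E)$ is an algebra, and the converse follows from Corollary \ref{c1}, Theorem \ref{t5}, and the elementary fact that $\Lambda_1(\varepsilon)$ is stable under coordinatewise multiplication. Your one-line estimate $\sup_n |x_n y_n| e^{-\varepsilon_n/k} \leq \|x\|_{2k}\,\|y\|_{2k}$ is the right verification of that last fact.
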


\textbf{Acknowledgements. }
The results in this paper are from the author's thesis, supervised by  Ayd{\i}n Aytuna. We would like to thank Prof. Aytuna for suggesting this topic and his constant guidance and insight.
\footnotesize{{}
\end{document}